\newtheorem{theorem}{Theorem}[section]
\newtheorem{corollary}[theorem]{Corollary}
\newtheorem{proposition}[theorem]{Proposition}
\newtheorem{lemma}[theorem]{Lemma}
\newtheorem{remark}[theorem]{Remark}
\def\irr#1{{\rm Irr}(#1)}
\def\irrr#1#2 {\irr {#1 \mid #2}}
\newcommand{\R}{\mathbb R}
\newcommand{\sfe}{{{\mathbb S}^{n-1}}}
\newcommand{\C}{\mathcal C}
\renewcommand{\d}{\mbox{\rm d}}
\begin{document}

\title[]{
Geometric properties of solutions to elliptic PDE's in Gauss space\\
And related Brunn-Minkowski type inequalities}
\author[Andrea Colesanti, Lei Qin, Paolo Salani ]{Andrea Colesanti, Lei Qin, Paolo Salani}
\address{Dipartimento di Matematica e Informatica ``U. Dini", Universit\`a degli Studi di Firenze}
\email{andrea.colesanti@unifi.it}
\address{Institute of Mathematics,
Hunan University, Changsha, 410082, China}
\email{qlhnumath@hnu.edu.cn}
\address{Dipartimento di Matematica e Informatica ``U. Dini", Universit\`a degli Studi di Firenze}
\email{paolo.salani@unifi.it}
\keywords{Log-concave functions; Convex bodies; Brunn-Minkowski inequality; Viscosity solutions; Gaussian space}

\begin{abstract}
We prove a Brunn-Minkowski type inequality for the first (nontrivial) Dirichlet eigenvalue of the weighted $p$-operator
\[
-\Delta_{p,\gamma}u=-\text{div}(|\nabla u|^{p-2} \nabla u)+(x,\nabla u)|\nabla u|^{p-2},
\]
where $p>1$, in the class of bounded Lipschitz domains in $\R^n$. We also prove that the corresponding positive eigenfunctions are log-concave if the domain is convex.
\end{abstract}

\maketitle
\baselineskip18pt

\parskip3pt
\section{Introduction}

The classical Brunn-Minkowski inequality has a deep impact on both geometry and analysis, see the beautiful survey paper by Gardner \cite{Gardner2002} for more information. Its tentacles extend in many directions and, noticeably, Brunn-Minkowski type inequalities have been proved for several variational functionals, see for instance \cite{Borell1, Borell3, Brascamp-Lieb1976, Colesanti, Colesanti-Salani}.
In particular, the Brunn-Minkowski inequality for the first Dirichlet eigenvalue of the Laplace operator \cite{Brascamp-Lieb1976} has been extended to the cases of the $p$-Laplace operator \cite{Andrea-Cuoghi-Salani2006}, of the Monge-Amp\`ere operator \cite{Salani2005}, of the $k$-Hessian operators \cite{LiuMaXu, Salani2012}, and of the principal frequency of fully non-linear homogeneous elliptic operators \cite{Crasta-Fragal2020}.
Moreover, very recently, a Brunn-Minkowski inequality has been established for the principal frequency of the Laplace operator in the Gauss space, see  \cite{Andrea-Paolo2024}.
Roughly speaking, the main goal of this paper is to extend the latter result to the $p$-Laplace operator in the Gauss space, for $p>1$.

Noticeably, every Brunn-Minkowski type inequality mentioned above is strictly connected to a suitable convexity property of the minimizing functions of the variational functionals at hand. As far as we know, in the known cases of an eigenvalue/principal frequency, this property turns out to be log-concavity. Ultimately, the same happens for the principal frequency of the Gaussian $p$-Laplacian, indeed here we also prove the log-concavity of the associated eigenfunctions in convex domains.


Now, let us describe more precisely the results of this paper. Let $\Omega$ be a bounded domain in $\R^n$. For $p>1$, the $p$-th Rayleigh quotient in Gauss space is given by
\begin{equation}\label{varia-defi}
\frac{\int_{\Omega} |\nabla u(x)|^{p} d\gamma}{\int_\Omega |u(x)|^{p} d\gamma}.
\end{equation}
Here $\gamma$ denotes the Gaussian probability measure in $\R^n$, given by
$$
\gamma(A)= \frac{1}{(2\pi)^{n/2}} \int_{A} e^{-|x|^2/2} dx\,,\quad\text{for any measurable set }A\,.
$$
We consider the following minimum problem:
$$
\lambda_{p,\gamma}:=\inf\left\{
\frac{\int_{\Omega} |\nabla u|^{p} d\gamma}{\int_\Omega |u|^{p} d\gamma}\colon u\in W^{1,p}_0(\Omega,\gamma),\, \int_\Omega |u|^{p} d\gamma>0
\right\}.
$$
The corresponding Euler-Lagrange equation is given by
\begin{equation}\label{PDE1}
\left\{
\begin{aligned}
& -\Delta_{p,\gamma} u=\lambda_{p,\gamma} |u|^{p-2}u \ \  &\mathrm {in}\ \ \Omega, \\
&\ u=0 \ \ \ \ &\mathrm {on} \ \ \partial \Omega\,,
\end{aligned}
\right.
\end{equation}
where
$$
-\Delta_{p,\gamma} u:=-\text{div}(|\nabla u|^{p-2} \nabla u)+(x,\nabla u)|\nabla u|^{p-2}\,.
$$
Any (weak) solution of problem \eqref{PDE1} is called a first (Dirichlet) eigenfunction. For the existence, we can refer, for instance, to \cite[section 6]{Franceschi2024}.
By standard arguments (based on the ones of \cite{Lindqvist1990}), it follows that all first eigenfunctions coincide up to scalar multiplication, and they don't change sign inside $\Omega$, so we will consider only nonnegative eigenfunctions, if not differently specified.
More details can be found in \cite{Feng2021} and \cite{Franceschi2024}, where the authors give basic introductory material for problem \eqref{PDE1}.

\medskip

As already said, one of our main results is a Brunn-Minkowski type inequality for $\lambda_{p,\gamma}$, in the class of open bounded domains with Lipschitz boundary. {   To avoid useless complications, throughout the paper we will consider only connected sets, even if not explicitly specified.}
\begin{theorem}\label{T1}
Fix $p>1$ and $t\in[0,1]$. Let $\Omega_0,\Omega_1$  and
\[
\Omega_t=(1-t)\Omega_0+\Omega_1:=\{(1-t)x_0+tx_1:\ x_0\in \Omega_0,\ x_1\in \Omega_1 \},
\]
be open bounded Lipschitz domains in $\R^n$, $n\geq 2$.
Then
\[
\lambda_{p,\gamma}(\Omega_t)\leq (1-t)\lambda_{p,\gamma}(\Omega_0)+t\lambda_{p,\gamma}(\Omega_1).
\]
\end{theorem}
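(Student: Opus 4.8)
The plan is to bound $\lambda_{p,\gamma}(\Omega_t)$ from above by producing a good competitor in the variational problem, namely a suitable ``Minkowski combination'' $u_t$ of the first eigenfunctions $u_0,u_1$ on $\Omega_0,\Omega_1$, and then showing that $u_t$ is a viscosity subsolution of the equation \eqref{PDE1} with the averaged eigenvalue on the right‑hand side. Write $\lambda_i:=\lambda_{p,\gamma}(\Omega_i)$ and $\Lambda:=(1-t)\lambda_0+t\lambda_1$. By interior regularity for the $p$-Laplacian the $u_i$ are positive and locally $C^{1,\alpha}$ inside $\Omega_i$, vanish on $\partial\Omega_i$, and (being weak solutions of \eqref{PDE1}) are viscosity solutions there; since \eqref{varia-defi} is invariant under positive rescalings of $u$, we normalize $u_0,u_1$ as convenient. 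Define
\[
u_t(z):=\sup\bigl\{\,m\bigl(u_0(x_0),u_1(x_1)\bigr)\ :\ x_0\in\Omega_0,\ x_1\in\Omega_1,\ (1-t)x_0+tx_1=z\,\bigr\},\qquad z\in\Omega_t,
\]
where $m=m_t$ is a smooth, increasing combination (a quasi‑arithmetic mean, to be pinned down below). Since $\Omega_t=(1-t)\Omega_0+t\Omega_1$ one gets $u_t>0$ in $\Omega_t$; if $z\in\partial\Omega_t$ then every admissible decomposition forces $x_0\in\partial\Omega_0$ or $x_1\in\partial\Omega_1$, so $u_t=0$ on $\partial\Omega_t$; and $u_t$ is locally Lipschitz inside $\Omega_t$. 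That $u_t\in W^{1,p}_0(\Omega_t,\gamma)$ with $u_t\not\equiv0$ then follows by standard arguments (ultimately reinforced a posteriori by the subsolution estimate below).

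The core of the argument is: for the right $m$, $u_t$ is a viscosity subsolution of $-\Delta_{p,\gamma}v=\Lambda\,v^{p-1}$ in $\Omega_t$. Let $\Phi$ be smooth and touch $u_t$ from above at $z_0\in\Omega_t$, and let $x_0\in\Omega_0$, $x_1\in\Omega_1$ realize the supremum defining $u_t(z_0)$, so $(1-t)x_0+tx_1=z_0$ and $\Phi(z_0)=u_t(z_0)=m(u_0(x_0),u_1(x_1))$. Plugging the admissible perturbations $\bigl(x_0+\tfrac{\xi}{1-t},x_1\bigr)$ and $\bigl(x_0,x_1+\tfrac{\xi}{t}\bigr)$ of the decomposition of $z_0+\xi$ into the definition, and using monotonicity of $m$, one exhibits smooth functions $\psi_0,\psi_1$ — explicit compositions of $\Phi$ with the affine maps $x\mapsto z_0+c_i(x-x_i)$ ($c_0=1-t$, $c_1=t$) and with the partial inverses of $m$ — that touch $u_0,u_1$ from above at $x_0,x_1$. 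As each $u_i$ is a viscosity subsolution of \eqref{PDE1}, this yields $-\Delta_{p,\gamma}\psi_i(x_i)\le\lambda_i\,u_i(x_i)^{p-1}$. It remains to carry out the ``operator algebra'': expand $-\Delta_{p,\gamma}\psi_i(x_i)$ via the chain rule in terms of $\nabla\Phi(z_0)$, $D^2\Phi(z_0)$ and the derivatives of $m$, and combine the two inequalities with weights $1-t,t$. The second‑order $p$-Laplacian part is handled by the concavity of $v\mapsto\mathrm{div}(|\nabla v|^{p-2}\nabla v)$ under Minkowski combinations of functions (this is precisely what forces the choice of $m$, and, after a convexity estimate, makes $(1-t)\lambda_0u_0(x_0)^{p-1}+t\lambda_1u_1(x_1)^{p-1}$ collapse to $\Lambda\,u_t(z_0)^{p-1}$); the first‑order drift term $(x,\nabla v)|\nabla v|^{p-2}$ is handled using $z_0=(1-t)x_0+tx_1$ together with the log‑concavity of the Gaussian weight $e^{-|x|^2/2}$, i.e.\ the convexity of $x\mapsto|x|^2$. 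The output is $-\Delta_{p,\gamma}\Phi(z_0)\le\Lambda\,u_t(z_0)^{p-1}$, with the degenerate case $\nabla\Phi(z_0)=0$ treated in the usual viscosity manner.

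Granting this, the conclusion is soft. By the equivalence of viscosity and weak sub/supersolutions for the $p$-Laplacian, $u_t$ is a weak subsolution; testing the inequality with $u_t$ itself (legitimate since $u_t\ge0$) and integrating by parts in Gauss space gives $\int_{\Omega_t}|\nabla u_t|^p\,d\gamma\le\Lambda\int_{\Omega_t}u_t^p\,d\gamma$, so the Rayleigh quotient of $u_t$ is at most $\Lambda$, whence $\lambda_{p,\gamma}(\Omega_t)\le\Lambda=(1-t)\lambda_{p,\gamma}(\Omega_0)+t\,\lambda_{p,\gamma}(\Omega_1)$.

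The step I expect to be the real obstacle is the subsolution claim, and inside it two things in particular: identifying the combination $m$ for which the operator algebra closes with constant $\Lambda$ — this encodes the Brunn--Minkowski structure and is also the reason the Gaussian setting yields the additive inequality above rather than the homogeneous concavity available in the Euclidean case of \cite{Andrea-Cuoghi-Salani2006} — and propagating the Gaussian drift $(x,\nabla v)|\nabla v|^{p-2}$ through the computation, where the convexity of $|x|^2$ must be used sharply, as in the Laplacian case of \cite{Andrea-Paolo2024}. The remaining points (admissibility of $u_t$, the degeneracy of the $p$-Laplacian at critical points) are routine with the standard viscosity‑solutions toolkit.
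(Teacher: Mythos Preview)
Your overall architecture is exactly the paper's: build a Minkowski combination $u_t$ of the eigenfunctions, show it is a viscosity subsolution of $-\Delta_{p,\gamma}v=\Lambda\,v^{p-1}$ in $\Omega_t$, invoke Theorem~\ref{Viscosity-weak} to pass to the weak formulation, and test against $u_t$ itself. The endgame (extension by zero across $\partial\Omega_t$, Rayleigh quotient) is fine.

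The genuine gap is precisely where you flag ``the real obstacle'': you never identify $m$, and without it there is no proof. The paper's choice is the geometric mean,
\[
u_t(x)=\sup\bigl\{u_0(x_0)^{1-t}u_1(x_1)^{t}:\ x=(1-t)x_0+tx_1,\ x_i\in\overline{\Omega}_i\bigr\},
\]
and with this specific $m$ the ``operator algebra'' is not a concavity inequality but an \emph{identity}. At an optimal decomposition $\bar x=(1-t)\bar x_0+t\bar x_1$ the Lagrange multiplier condition gives $\nabla u_0(\bar x_0)/u_0(\bar x_0)=\nabla u_1(\bar x_1)/u_1(\bar x_1)=:\theta$. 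The paper does not build two rescaled test functions $\psi_0,\psi_1$ as you propose, but a single \emph{translated} product
\[
\psi(x)=u_0(x-\bar x+\bar x_0)^{1-t}\,u_1(x-\bar x+\bar x_1)^{t},
\]
which touches $u_t$ from above at $\bar x$ with no affine scaling factors to carry around. Because $\nabla\psi(\bar x)=\psi(\bar x)\,\theta$ and $D^2\psi(\bar x)=\psi(\bar x)\bigl[(1-t)D^2u_0/u_0+t\,D^2u_1/u_1\bigr]$, formula~\eqref{p-Lapla-form} and the $(p-1)$-homogeneity yield exactly
\[
\frac{\Delta_p\psi(\bar x)}{\psi(\bar x)^{p-1}}=(1-t)\,\frac{\Delta_p u_0(\bar x_0)}{u_0(\bar x_0)^{p-1}}+t\,\frac{\Delta_p u_1(\bar x_1)}{u_1(\bar x_1)^{p-1}}.
\]
No abstract ``concavity of $v\mapsto\mathrm{div}(|\nabla v|^{p-2}\nabla v)$ under Minkowski combinations'' is used or needed.

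Your reading of the drift term is also off. After dividing by $u_i^{p-1}$, the drift contributes $(\bar x_i,\theta)|\theta|^{p-2}$, and these average \emph{exactly} to $(\bar x,\theta)|\theta|^{p-2}$ by linearity of the inner product and $\bar x=(1-t)\bar x_0+t\bar x_1$. The log-concavity of $e^{-|x|^2/2}$ (the convexity of $|x|^2$) plays no role here; the additive form of the Brunn--Minkowski inequality in the Gaussian setting is a consequence of this exact cancellation, not of a convexity estimate on the weight.
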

\begin{remark}\label{rem:Lip}
Note that, if $\Omega_0$ and $\Omega_1$ belong to the class $\mathcal{A}^n$ of Lipschitz domains in $\R^n$ with positive reach, then $\Omega_t$ belongs to the same class as well, as observed in \cite{Crasta-Fragal2020}. The Lipschitz regularity of the involved domains assures that the related solutions of problem \eqref{PDE1} are continuous up to the boundary. For this, of course, we could assume weaker conditions, in particular a suitable Wiener's type condition, see \cite{GariepyZiemer}.  On the other hand, in the proof of Theorem \ref{T1} the Lipschitz regularity of $\partial\Omega_t$ will be also used to prove that a viscosity sub-solution of problem \eqref{PDE1}, once extended as $0$ outside of $\Omega_t$, remains a viscosity sub-solution. For this property, as well, we could weaken the assumptions,
however, we prefer to give a clear and simple statement and to leave the investigation in this direction to a possible future research.
\end{remark}
\begin{remark}
We do not attempt to give an account of the vast literature concerning regularity for solutions of quasilinear elliptic equations.
We just point out (see Proposition \ref{prop:regularity}) that, for any bounded open domain $\Omega$, a weak solution $u$ of problem \eqref{PDE1} is in fact of class $C^{1,\alpha}_{\mathrm{loc}}(\Omega)\cap C^2(\Omega \setminus \bar{\C})$ for some $\alpha \in (0,1)$, where $\C=\{x\in \Omega:\ \nabla u(x)=0 \}$;
moreover, if $\partial\Omega$ is of class $C^{1,\alpha}$, then  $u\in C^{1,\beta}(\overline{\Omega})\cap C^2(\Omega \setminus \bar{\C})$ (some $\beta\in (0,1)$). Hence, if $\Omega_0$ and $\Omega_1$ are of class $C^{1,\alpha}$, taking into account also the previous Remark \ref{rem:Lip}, the proofs of Theorem \ref{T1} and of our other subsequent results would be simpler.
\end{remark}

As it is now well understood, when a Brunn-Minkowski type inequality holds, it can be used to derive an Urysohn's type inequality for the involved functional (see for instance \cite{BS}, and \cite{S} for a related general theory). In order to state the property that we obtain in this case, let us recall that, given an open bounded convex set $\Omega$ and a direction $y\in\sfe$, the {\em width of $\Omega$ in the direction $y$}, that we denote by $w(\Omega,y)$, is the distance between the supporting hyperplanes to $\Omega$ with outer unit normals $y$ and $-y$. The {\em mean width} $w(\Omega)$ of $\Omega$ is then defined as
$$
w(\Omega)=\frac1{\mathcal{H}^{n-1}(\sfe)}\int_{\sfe}w(\Omega,y)\d{\mathcal H}^{n-1}(y),
$$
where ${\mathcal H}^{n-1}$ is the $(n-1)$-dimensional Hausdorff measure.
If $\Omega$ is bounded, but not necessarily convex, we refer to its mean width $w(\Omega)$ as the mean width of its convex hull. Similarly to \cite[Corollary 1.5]{Andrea-Paolo2024}, we obtain the following result.

\begin{corollary}\label{cor-min} Let $p>1$ and $\Omega$ be an open bounded Lipschitz domain in $\R^n$, $n\ge 2$. Then
\begin{equation}\label{Urysohn}
\lambda_{p,\gamma}(\Omega)\geq \lambda_{p,\gamma}(\Omega^\sharp),
\end{equation}
where
$$
\mbox{$\Omega^\sharp$ is a ball centered at $0$ such that}\,\, w(\Omega^\sharp)=w(\Omega).
$$
\end{corollary}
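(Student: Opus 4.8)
The plan is to deduce \eqref{Urysohn} from the Brunn--Minkowski inequality of Theorem \ref{T1} by a rotational averaging argument, following the scheme of \cite[Corollary 1.5]{Andrea-Paolo2024}. First I would reduce to the case $\Omega$ convex: since $\Omega\subseteq\mathrm{conv}(\Omega)$, the latter being an open bounded convex (hence Lipschitz) set, and since $\lambda_{p,\gamma}$ is nonincreasing under set inclusion (immediate from the variational definition, extending admissible functions by $0$), we get $\lambda_{p,\gamma}(\Omega)\ge\lambda_{p,\gamma}(\mathrm{conv}(\Omega))$; as $w(\Omega)=w(\mathrm{conv}(\Omega))$ by definition, the ball $\Omega^\sharp$ is the same for $\Omega$ and for $\mathrm{conv}(\Omega)$, so it suffices to treat convex $\Omega$. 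Next I would record that $\lambda_{p,\gamma}$ is invariant under rotations fixing the origin: for $R\in SO(n)$ and $v(x)=u(Rx)$ one has $|\nabla v(x)|=|\nabla u(Rx)|$, and since $|Rx|=|x|$, the change of variables $y=Rx$ shows that the Rayleigh quotient \eqref{varia-defi} on $R^{-1}\Omega$ at $v$ equals that on $\Omega$ at $u$; hence $\lambda_{p,\gamma}(R\Omega)=\lambda_{p,\gamma}(\Omega)$.

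Now fix rotations $R_1,R_2,\dots\in SO(n)$ that are equidistributed with respect to the Haar probability measure, and set $\Omega_N:=\frac1N(R_1\Omega+\cdots+R_N\Omega)$, an open bounded convex set. Writing $\Omega_{N+1}=\frac{N}{N+1}\,\Omega_N+\frac1{N+1}\,R_{N+1}\Omega$ and applying Theorem \ref{T1} inductively, together with the rotation invariance above, gives $\lambda_{p,\gamma}(\Omega_N)\le\lambda_{p,\gamma}(\Omega)$ for every $N$. On the other hand the support function of $\Omega_N$ is $h_{\Omega_N}(y)=\frac1N\sum_{i=1}^N h_\Omega(R_i^{-1}y)$, and by equidistribution (the convergence being pointwise, hence, by equi-Lipschitz continuity of the $h_{\Omega_N}$, uniform on $\sfe$) it converges to $\int_{SO(n)}h_\Omega(R^{-1}y)\,dR=\frac1{\mathcal{H}^{n-1}(\sfe)}\int_{\sfe}h_\Omega\,d\mathcal{H}^{n-1}=\tfrac12\,w(\Omega)=:r$, a constant. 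Since a convex body whose support function is $\le r+\e$ on $\sfe$ is contained in the ball of radius $r+\e$ about $0$, it follows that $\Omega_N\to\Omega^\sharp$ in the Hausdorff sense, with $\Omega^\sharp$ the ball of radius $r$ centered at $0$ (which indeed has mean width $2r=w(\Omega)$); in particular, for each $\delta>0$ one has $\Omega_N\subseteq(1+\delta)\Omega^\sharp$ for all large $N$.

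To conclude, use the monotonicity of $\lambda_{p,\gamma}$ once more: $\lambda_{p,\gamma}(\Omega)\ge\lambda_{p,\gamma}(\Omega_N)\ge\lambda_{p,\gamma}((1+\delta)\Omega^\sharp)$ for large $N$, hence $\lambda_{p,\gamma}(\Omega)\ge\lambda_{p,\gamma}((1+\delta)\Omega^\sharp)$; letting $\delta\to0^+$ and invoking the elementary continuity of $\delta\mapsto\lambda_{p,\gamma}((1+\delta)\Omega^\sharp)$ — which, via $x\mapsto(1+\delta)^{-1}x$ in \eqref{varia-defi}, reduces to the continuous dependence of the first eigenvalue on the weight $e^{-(1+\delta)^2|x|^2/2}$ on the fixed bounded ball $\Omega^\sharp$ — yields $\lambda_{p,\gamma}(\Omega)\ge\lambda_{p,\gamma}(\Omega^\sharp)$, which is \eqref{Urysohn}.

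I expect the only genuinely delicate point to be the passage from the two-set inequality of Theorem \ref{T1} to the rotational average: one must keep the approximating sets inside the class to which Theorem \ref{T1} applies — this is exactly why reducing to convex $\Omega$ at the start is convenient, since openness, boundedness and convexity are all stable under Minkowski combination — and one must control $\lambda_{p,\gamma}$ along the degenerating family $\Omega_N\to\Omega^\sharp$, which here is done cheaply by the inclusion $\Omega_N\subseteq(1+\delta)\Omega^\sharp$ plus the scaling behaviour of $\lambda_{p,\gamma}$ on balls, so that no general lower semicontinuity of $\lambda_{p,\gamma}$ is needed. Equivalently, the argument above can be packaged into a single ``integrated'' Brunn--Minkowski inequality $\lambda_{p,\gamma}\!\left(\int_{SO(n)}R\Omega\,dR\right)\le\int_{SO(n)}\lambda_{p,\gamma}(R\Omega)\,dR$, whose left side is $\lambda_{p,\gamma}(\Omega^\sharp)$ and right side $\lambda_{p,\gamma}(\Omega)$, but justifying that form passes through the same discretization.
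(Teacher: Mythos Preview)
Your proposal is correct and follows exactly the approach the paper intends: the paper's own proof simply says ``The proof is exactly the same as \cite[Corollary 1.5]{Andrea-Paolo2024}'', and what you have written is precisely that rotational-averaging argument spelled out in detail, with the additional care of reducing to convex $\Omega$ so that Theorem \ref{T1} applies at each step and of handling the limit via the inclusion $\Omega_N\subseteq(1+\delta)\Omega^\sharp$ plus continuity on balls.
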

We recall that in the plane mean width coincides with the Euclidean perimeter of the convex hull. Then the above corollary, for $n=2$, tells that the disk centered at the origin has the minimum principal Gauss $p$-frequency among sets with given Euclidean perimeter. Hence, on one hand, Corollary \ref{cor-min} (also for $n$ larger than $2$) may appear somewhat surprising, since  minimizers in the Faber-Krahn inequality in the Gauss space (like isoperimetric sets) are half-spaces (see \cite{Ehrhard-84} , and, for instance, \cite{BCF, CCLaMP} for more references). On the other hand, the mean width of a half-space is infinity, moreover the width is a metric quantity and does not depend on the measure, so \eqref{Urysohn} does not fall within the cohort of inherently Gaussian isoperimetric-type inequalities.

Our second main result is the extension to problem  \eqref{PDE1} of a well known result for the Laplacian: the first Dirichlet (positive) eigenfunction of the Laplace operator in a convex set is log-concave. This fact was proved by Brascamp and Lieb in \cite{Brascamp-Lieb1976}, and subsequently different proofs have been provided in \cite{Caffarelli-Friedman,Kennington,Korevaar1,Korevaar-Lewis}. The same result has been extended to the case of the $p$-Laplace operator in \cite{Sakaguchi1987} and to the Ornstein-Uhlenbeck operator \cite{Andrea-Paolo2024}. Here we prove the same for the Gaussian $p$-Laplacian.

\begin{theorem}\label{T2}
Let $p>1$, $n\geq 2$, $\Omega $ be an open, bounded and convex domain in $\R^n$, and $u$ be a solution of problem \eqref{PDE1}, with $u>0$ in $\Omega$.
Then the function
\[
W=\ln u
\]
is concave in $\Omega$.
\end{theorem}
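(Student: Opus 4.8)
The plan is to adapt to this setting the subsolution technique behind Theorem~\ref{T1}, supplemented by a rigidity argument. Put $W=\ln u$; by Proposition~\ref{prop:regularity} it belongs to $C^{1,\alpha}_{\mathrm{loc}}(\Omega)\cap C^2(\Omega\setminus\bar\C)$, is continuous on $\overline\Omega$ (a bounded convex domain is Lipschitz) and $\equiv-\infty$ on $\partial\Omega$. From $\nabla u=u\nabla W$ and $|\nabla u|^{p-2}\nabla u=u^{p-1}|\nabla W|^{p-2}\nabla W$ one checks that \eqref{PDE1} is equivalent to
\[
-(p-1)|\nabla W|^{p}-\Delta_{p}W+(x,\nabla W)|\nabla W|^{p-2}=\lambda_{p,\gamma}(\Omega)\quad\text{in }\Omega,
\]
i.e., on $\Omega\setminus\bar\C$, to $\mathcal F(x,\nabla W,D^2W)=0$ where
\[
\mathcal F(x,\xi,X)=\mathrm{tr}(X)+(p-2)\frac{\langle X\xi,\xi\rangle}{|\xi|^{2}}+(p-1)|\xi|^{2}-(x,\xi)+\lambda_{p,\gamma}(\Omega)\,|\xi|^{2-p}.
\]
The two features I will exploit are that $\mathcal F$ is affine in $x$ and in $X$, and that $X\mapsto\mathcal F(x,\xi,X)$ is nondecreasing for the matrix order, since its principal part has coefficient matrix $I+(p-2)\,\xi\otimes\xi/|\xi|^{2}$, whose eigenvalues are $p-1>0$ and $1$.

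Fix $t\in(0,1)$; since $\Omega$ is convex, $\Omega=(1-t)\Omega+t\Omega$. I would introduce the $(1-t,t)$ sup-convolution
\[
v_t(z)=\sup\bigl\{u(x_0)^{1-t}u(x_1)^{t}:\ x_0,x_1\in\overline\Omega,\ (1-t)x_0+tx_1=z\bigr\},\qquad W_t:=\ln v_t,
\]
and show that $v_t$ is a nonnegative viscosity subsolution of $-\Delta_{p,\gamma}v=\lambda_{p,\gamma}(\Omega)\,v^{p-1}$ in $\Omega$. Let $\psi\in C^2$ touch $W_t$ from above at $z_0\in\Omega$; since $v_t\ge u>0$, the supremum at $z_0$ is attained (by compactness) at some $(x_0,x_1)$ with $u(x_0),u(x_1)>0$, hence $x_0,x_1\in\Omega$. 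Then $(y_0,y_1)\mapsto\psi((1-t)y_0+ty_1)-(1-t)W(y_0)-tW(y_1)$ has an interior minimum at $(x_0,x_1)$; its first-order condition gives $\xi:=\nabla\psi(z_0)=\nabla W(x_0)=\nabla W(x_1)$, and if $\xi\ne0$ this forces $\nabla u\ne0$ at $x_0,x_1$, so $W$ is genuinely $C^2$ there and the second-order condition, evaluated on the diagonal $(\eta,\eta)$, yields $D^2\psi(z_0)\succeq(1-t)D^2W(x_0)+t\,D^2W(x_1)$. Averaging the identities $\mathcal F(x_i,\xi,D^2W(x_i))=0$ with weights $1-t,t$ and using the affinity of $\mathcal F$ together with $z_0=(1-t)x_0+tx_1$ gives $\mathcal F\bigl(z_0,\xi,(1-t)D^2W(x_0)+t\,D^2W(x_1)\bigr)=0$, and the monotonicity of $\mathcal F$ in $X$ then gives $\mathcal F(z_0,\xi,D^2\psi(z_0))\ge0$; this is precisely the viscosity subsolution inequality for $W_t$, hence (passing to $e^{W_t}=v_t$) for $v_t$. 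The case $\nabla\psi(z_0)=0$, which does occur (e.g.\ where $v_t$ attains its maximum), is handled by the usual conventions for viscosity solutions of $p$-Laplace type equations.

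To conclude, I would argue rigidity. A supporting-hyperplane argument, using the convexity of $\Omega$ and the continuity of $u$ up to $\partial\Omega$, shows $v_t\to0$ at $\partial\Omega$; together with its local Lipschitz regularity inside $\Omega$ and its boundary decay, comparable to that of $u$, this gives $v_t\in W^{1,p}_0(\Omega,\gamma)$ with $\int_\Omega v_t^{p}\,d\gamma>0$. By the equivalence of viscosity and weak subsolutions, testing the weak inequality with $\varphi=v_t$ yields $\int_\Omega|\nabla v_t|^{p}\,d\gamma\le\lambda_{p,\gamma}(\Omega)\int_\Omega v_t^{p}\,d\gamma$; the reverse inequality is the variational definition of $\lambda_{p,\gamma}(\Omega)$, so $v_t$ is a minimizer, hence a first eigenfunction, hence $v_t=c\,u$. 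From $v_t\ge u$ one gets $c\ge1$, and from $v_t\le\max_{\overline\Omega}u$ one gets $c\le1$, so $c=1$ and $v_t\equiv u$; unwinding the definition of $v_t$, this says $u((1-t)x_0+tx_1)\ge u(x_0)^{1-t}u(x_1)^{t}$ for all $x_0,x_1\in\Omega$ and every $t\in[0,1]$, i.e.\ $W=\ln u$ is concave. The main obstacle is the middle step: making the second-order touching computation rigorous in spite of $u$ being only $C^{1,\alpha}$ near the critical set (the point $\xi\ne0$ is what pushes the maximizing points off $\bar\C$), treating test functions with vanishing gradient, and justifying the passage from viscosity to weak subsolutions for the Gaussian $p$-operator --- technicalities to be handled as in \cite{Andrea-Paolo2024,Andrea-Cuoghi-Salani2006}. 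The classical alternative, Korevaar's concavity maximum principle applied directly to $W$, would instead require ruling out that the concavity function attains its maximum on $\bar\C$, which is precisely the difficulty this route avoids.
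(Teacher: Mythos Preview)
Your proposal is correct and follows essentially the same route as the paper: form the multiplicative sup-convolution $v_t$, show it is a viscosity subsolution of the eigenvalue equation (you do this at the level of $W=\ln u$ via the affinity of $\mathcal F$ in $x$ and $X$, the paper does it at the level of $u$ using the explicit test function $\psi(x)=u(x-\bar x+\bar x_0)^{1-t}u(x-\bar x+\bar x_1)^t$ -- these are the same computation), pass to the weak formulation via Theorem~\ref{Viscosity-weak}, test with $v_t$ itself, and use simplicity of the first eigenvalue together with $\sup v_t=\sup u$ to force $v_t\equiv u$. The only cosmetic difference is that the paper phrases the rigidity step directly as $\sup u_t=\sup u$, whereas you split it into $c\ge1$ and $c\le1$; both are one line.
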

\begin{remark}
We notice that under stronger regularity assumptions, it is possible to adapt the method of \cite{Sakaguchi1987} to obtain the log-concavity of positive first eigenfunctions, as it is done in the upcoming \cite{Lei2}. One distinguished feature of our approach, apart from allowing weaker regularity, is that we can treat both the logconcavity of eigenfunctions and the Brunn-Minkowski inequality of the principal frequency at once, with the very same argument.
\end{remark}

One of the crucial steps in the proofs of Theorems \ref{T1} and \ref{T2} is to consider a certain convolution of solutions of problem \eqref{PDE1} (of the solutions in the domains $\Omega_0$ and $\Omega_1$ for Theorem \ref{T1}, or of the solution in $\Omega$ with itself in the case of Theorem \ref{T2}, we mean). This convolution is shown to be a viscosity sub-solution, and then it is compared with the solution of the problem, by calculating its $p$-th Rayleigh Gaussian quotient.
For this, we need to prove that viscosity sub-solutions are also weak sub-solutions (see Theorem \ref{Viscosity-weak}). This fact has its own importance and it is, in our opinion, a third interesting result of this paper. Notice that the equivalence of distributional weak solutions and viscosity solutions for elliptic equations is a deep question, whose study was started by Lions \cite{Lions} and Ishii \cite{Ishii}. In the case of $p$-Laplace equation, the equivalence was established by Juutinen, Lindqvist and Manfredi \cite{Juutinen2001}. Then, Julin and Juutinen \cite{Versa-Julin2012} gave a new direct proof that applies to various other equations as well. Further generalizations can be found, for instance, in \cite{Radulescu2024, MO, Siltakoski2018}.
\smallskip

{  Finally, let us notice that, although the Gaussian measures is probably the most famous and studied one, it would be interesting to consider other different measures and to see what happens with weights that satisfy suitable concavity conditions. }

\medskip

The organization of the paper is as follows. In Section \ref{Se2}, we list some basic facts and notions. In Section \ref{Se3}, we prove that viscosity subsolutions are weak subsolutions of the equation in \eqref{PDE1}, see Theorem \ref{Viscosity-weak}. In Section \ref{Se4}, we prove Theorem \ref{T1} and Theorem \ref{T2}.

\medskip

{\bf Acknowledgments.} We thank Andrea Cianchi for pointing out Proposition \ref{prop:regularity}, and we thank both Andrea Cianchi and Matteo Focardi, from Universit\`a degli Studi di Firenze, for interesting discussions about the regularity of weak solutions of the problem at hand.

The first author was partially supported by INdAM through different GNAMPA projects, and by the Italian "Ministero dell'Universit\`a e Ricerca" and EU through different PRIN projects. The Third author has been partially financed by European Union -- Next Generation EU -- through the project "Geometric-Analytic Methods for PDEs and Applications (GAMPA)", within the PRIN 2022 program (D.D. 104 - 02/02/2022 Ministero dell'Universit\`a e della Ricerca).

\section{Preliminaries}\label{Se2}

\subsection{Basic notation}

Let $\mathbb{R}^n$ be the $n$-dimensional Euclidean space, $x$ is a point in $\R^n$, $|x|$ is the Euclidean norm of $x$ and $dx$ denotes integration with respect to Lebesgue measure in $\R^n$. For $x\in \R^n$ and $r>0$, we denote by $B_{r}(x)$ the open ball of radius $r$ centered at $x$. If $\Omega\subset \R^n$, we denote the closure, interior and the boundary of $\Omega$ by $\overline{\Omega}$, $\mathrm{int} \Omega$ and $\partial \Omega$, respectively, and we let $|\Omega|$ be its Lebesgue measure. Let $C_{0}^{\infty}(\Omega)$ be the set of functions from $C^{\infty}(\Omega)$ having compact support in $\Omega$. We denote the support set of a real-value function by $\mathrm{spt}(u)$. Let $I$ denote the $n\times n$ unit matrix and $\mathcal{S}^n$ denote the space of $n\times n$ symmetric matrices. We will often use $c$ for positive constants, which may vary between appearances, even within a chain of inequalities.

We denote the Gauss probability space by $( \mathbb{R}^{n}, \gamma)$, the measure $\gamma$ is given by
$$
\gamma(\Omega)= \frac{1}{(2\pi)^{n/2}} \int_{\Omega} e^{-|x|^2/2} dx,
$$
for any measurable set $\Omega\subseteq \mathbb{R}^{n}$. Throughout the paper, $d\gamma$ stands for integration with respect to $\gamma$, i.e., $d\gamma(x)=(2\pi)^{-n/2} e^{-|x|^2/2}dx$, and $d\gamma_{\partial \Omega}$ is the $(n-1)$-dimensional Hausdorff measure on $\partial\Omega$ with respect to $\gamma$.

Let $\Omega\subseteq \R^n$ be a measurable set. Given $1<p<+\infty$, the expressions $L^p(\Omega)$, $W^{1,p}(\Omega)$,  $W^{1,p}_{\text{loc}}(\Omega)$ and $W^{1,p}_0(\Omega)$ have the usual meaning. We denote by $L^p(\Omega,\gamma)$ the space of all measurable functions $u$ such that
\begin{equation}\nonumber
L^p(\Omega,\gamma)=\left\{ u:\Omega\rightarrow \R:\ \|u\|^{p}_{p,\gamma}:=\int_{\Omega} |u(x)|^p d\gamma <+\infty \right\}.
\end{equation}
Similarly,
$W^{1,p}(\Omega,\gamma)$, $W^{1,p}_{\mathrm{loc}}(\Omega,\gamma)$ and
$W^{1,p}_0(\Omega,\gamma)$ denotes the corresponding $\gamma$-weighted Sobolev spaces. When $\Omega$ is a bounded domain, then the density function $e^{-|x|^2/2}$ has positive upper and lower bounds in $\Omega$, hence it is obvious that $L^p(\Omega,\gamma)=L^p(\Omega)$, $W^{1,p}(\Omega,\gamma)=W^{1,p}(\Omega)$, and so on. However, we only have $W^{1,p}(\Omega)\subset W^{1,p}(\Omega,\gamma)$ and $W^{1,p}_0(\Omega)\subset W^{1,p}_0(\Omega,\gamma)$ when $\Omega$ is an unbounded domain. (see for instance \cite{Heinonen2006,Tero1994,Radulescu2019} and references therein).

If $u$ is twice differentiable, by $\nabla u$ and $D^2 u$ we denote the gradient of $u$ and its Hessian matrix, respectively, i.e., $\nabla u=(\frac{\partial u}{\partial x_1},\cdots,\frac{\partial u}{\partial x_n})$ and $D^2u=(\frac{\partial^2 u}{\partial x_i \partial x_j})$.

The Ornstein-Uhlenbeck operator is defined as
\begin{equation}\label{OU1}
L_\gamma(u):=\Delta u-(x,\nabla u)\,.
\end{equation}
For suitably regular functions and domains, clearly we have the following integration by parts identity:
\[
\int_{\Omega} \upsilon L_\gamma (u) d\gamma=-\int_\Omega \langle \nabla u, \nabla \upsilon \rangle d\gamma+\int_{\partial \Omega} \upsilon \langle \nabla u, n_x \rangle d\gamma_{\partial \Omega},
\]
where $n_x$ is the outward unit normal vector at point $x\in \partial \Omega$. In this paper, we consider an extension of the Ornstein-Uhlenbeck operator which can be considered the Gaussian version of the $p$-Laplace operator:
\begin{equation}
-\Delta_{p,\gamma} (u):= -\Delta_pu+(x,\nabla u)|\nabla u|^{p-2}\,,
\end{equation}
where $$
\Delta_pu=\mathrm{div} (|\nabla u|^{p-2}\nabla u)$$
is the usual $p$-Laplacian.
Similarly to the usual  Ornstein-Uhlenbeck operator (which corresponds to the case $p=2$), $\Delta_{p,\gamma}$ satisfies the following integration by parts identity
\[
\int_{\Omega} \upsilon \Delta_{p,\gamma}(u)  d\gamma=-\int_\Omega |\nabla u|^{p-2}\langle \nabla u, \nabla \upsilon \rangle d\gamma+\int_{\partial \Omega} \upsilon |\nabla u|^{p-2}\langle \nabla u, n_x \rangle d\gamma_{\partial \Omega},
\]
for suitably regular functions and domains.

As it is well known, the $p$-Laplace operator is degenerate elliptic for $p>2$ and singular for $1<p<2$. For this reason, it is natural to consider weak solutions of PDE's where the principal part is the $p$-Laplacian. However, it is often desirable to have a pointwise interpretation for identities involving the second derivatives of a function, even if these are only weak second derivatives and need not really exists everywhere.
So, for later convenience, we recall that for a generic function $u$, by a direct calculation, the following expression is valid at points where $u$ is twice differentiable and $\nabla u$ does not vanish:
\begin{equation}\label{p-Lapla-form}
\Delta_p u(x)=|\nabla u(x)|^{p-2} \Delta u(x)+(p-2)|\nabla u|^{p-4}\sum_{i,j=1}^n  \frac{\partial^2 u}{\partial x_i \partial x_j}\frac{\partial u}{\partial x_i} \frac{\partial u}{\partial x_j}  .
\end{equation}
Notice that when $p\geq 2$, the above expression is in fact formally valid also at critical points, and in particular, when $p>2$ it can be interpreted even if $u$ is just differentiable at $x$, in the sense that
$$
\Delta_pu(x)=0\qquad\text{when }\nabla u(x)=0\,\text{ and }p>2.
$$

\subsection{Weak solutions}
By a {\it weak solution} of problem \eqref{PDE1}, we mean a function $u\in W^{1,p}_{0}(\Omega,\gamma)$ such that, for every test function $\varphi\in C^{\infty}_0(\Omega)$, it holds
\begin{equation}\label{weak-solution}
\int_{\Omega} |\nabla u|^{p-2} \nabla u \cdot \nabla \varphi\ d\gamma -\lambda_{p,\gamma} \int_{\Omega}|u|^{p-2}u\cdot \varphi \ d\gamma= 0.
\end{equation}
Notice that the test function $\varphi$ in the inequality above, can be taken in fact in $W^{1,p}_{0}(\Omega,\gamma)$.

A function $u\in W^{1,p}_{\text{loc}}(\Omega,\gamma)$ is a {\it local weak super-solution} of the equation in \eqref{PDE1} if, for every non-negative function $\varphi\in C^{\infty}_0(\Omega)$, it holds
\begin{equation}\label{weak-supsolution}
\int_{\Omega} |\nabla u|^{p-2} \nabla u \cdot \nabla \varphi\ d\gamma -\lambda_{p,\gamma} \int_{\Omega}|u|^{p-2}u\cdot \varphi \ d\gamma\geq 0.
\end{equation}
Likewise, a {\it local weak sub-solution} of the equation in \eqref{PDE1} is defined as above with $\leq$ replacing $\geq$.
A function which is both a local weak sub-solution and a local weak super-solution is called {\it local weak solution}.


\begin{proposition}\label{prop:regularity}
If $\Omega$ is a bounded open domain and $u$ is a weak solution of problem \eqref{PDE1}, with $p>1$, then $u\in C^{1,\alpha}_{\mathrm{loc}}(\Omega)\cap C^2(\Omega\setminus\bar{\C})$ for some $\alpha \in (0,1)$, where $\C=\{x\in \Omega:\ \nabla u(x)=0 \}$.
Moreover, if $\partial\Omega$ is $C^{1,\alpha}$, then $u\in C^{1,\beta}(\overline\Omega)$, for some $\beta\in(0,1)$.
\end{proposition}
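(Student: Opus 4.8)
The plan is to rewrite \eqref{PDE1} as a quasilinear equation in divergence form with a smooth, uniformly positive weight, and then to invoke the classical regularity theory for such equations. Since $\Omega$ is bounded, the Gaussian density $w(x):=(2\pi)^{-n/2}e^{-|x|^2/2}$ is of class $C^\infty(\overline\Omega)$ with $0<c_1\le w\le c_2<\infty$ on $\overline\Omega$; in particular $W^{1,p}_0(\Omega,\gamma)=W^{1,p}_0(\Omega)$, and the weak formulation \eqref{weak-solution} says exactly that $u\in W^{1,p}_0(\Omega)$ is a distributional solution of
\[
\mathrm{div}\,A(x,\nabla u)=f\ \text{ in }\Omega,\qquad A(x,\xi):=w(x)|\xi|^{p-2}\xi,\qquad f(x):=-\lambda_{p,\gamma}\,w(x)\,|u(x)|^{p-2}u(x).
\]
The field $A$ satisfies the standard $p$-ellipticity, growth and (Lipschitz, hence H\"older) $x$-continuity conditions of the regularity theory, the whole $x$-dependence being carried by the harmless factor $w$. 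Before using that theory I would first record that $u\in L^\infty(\Omega)$: this is a De Giorgi--Nash--Moser type fact for weak solutions of such equations with a right-hand side controlled by $u$ itself, obtained by Moser iteration (which starts because $u\in W^{1,p}_0$ and $\Omega$ is bounded), exactly as for the Euclidean $p$-Laplace eigenvalue problem; consequently $f\in L^\infty(\Omega)$.

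With $f\in L^\infty$, the pair $(A,f)$ falls within the $C^{1,\alpha}$-regularity theory for degenerate and singular quasilinear equations (DiBenedetto, and Tolksdorf for an $x$-dependent principal part), which yields $u\in C^{1,\alpha}_{\mathrm{loc}}(\Omega)$ for some $\alpha\in(0,1)$. On the open set $\Omega\setminus\bar{\C}$ one has $\nabla u\ne 0$, and since $\nabla u$ is continuous, $|\nabla u|$ is bounded away from $0$ on compact subsets, so the operator is locally non-degenerate there. Dividing the pointwise identity coming from \eqref{p-Lapla-form} and \eqref{PDE1} by $|\nabla u|^{p-2}$ turns the equation into
\[
\Delta u+(p-2)\,\frac{\langle D^2u\,\nabla u,\nabla u\rangle}{|\nabla u|^{2}}=(x,\nabla u)-\lambda_{p,\gamma}\,\frac{|u|^{p-2}u}{|\nabla u|^{p-2}},
\]
whose coefficient matrix $I+(p-2)|\nabla u|^{-2}\,\nabla u\otimes\nabla u$ has eigenvalues $1$ and $p-1$, hence is uniformly elliptic on compact subsets of $\Omega\setminus\bar{\C}$. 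Since $u\in C^{1,\alpha}_{\mathrm{loc}}$ with non-vanishing gradient there, this matrix and the right-hand side are locally of class $C^{0,\alpha'}$ for some $\alpha'\in(0,1)$ (for $|u|^{p-2}u$ one uses that $t\mapsto|t|^{p-2}t$ is locally H\"older), so Schauder estimates applied to the linearized equation give $u\in C^{2,\alpha'}_{\mathrm{loc}}(\Omega\setminus\bar{\C})$, in particular $u\in C^2(\Omega\setminus\bar{\C})$.

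Finally, if $\partial\Omega$ is of class $C^{1,\alpha}$, the global bound $u\in L^\infty(\Omega)$, the boundary datum $u=0$ on $\partial\Omega$ and the structure of $(A,f)$ place us in the setting of Lieberman's boundary $C^{1,\beta}$-regularity theorem for quasilinear equations, giving $u\in C^{1,\beta}(\overline\Omega)$ for some $\beta\in(0,1)$. The only genuinely non-routine point is the bookkeeping needed to check that the Gaussian weight $w$ and the eigenvalue-type lower order term $\lambda_{p,\gamma}|u|^{p-2}u$, once $u$ is known to be bounded, really satisfy the admissible structure hypotheses of these theorems; I expect this, together with the preliminary $L^\infty$ estimate, to be the main obstacle, everything else being a direct application of standard results.
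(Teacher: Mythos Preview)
Your proposal is correct and follows essentially the same route as the paper: an $L^\infty$ bound for $u$ (the paper cites Ladyzhenskaya--Ural'tseva, which is precisely the De Giorgi--Moser iteration you describe), then Tolksdorf for interior $C^{1,\alpha}$, Schauder theory on the non-degenerate set $\Omega\setminus\bar{\C}$, and Lieberman for $C^{1,\beta}(\overline\Omega)$ when $\partial\Omega\in C^{1,\alpha}$. The only cosmetic difference is that the paper frames the Schauder step in divergence form, writing the equation on $\Omega\setminus\bar{\C}$ as $-\mathrm{div}(a(x)\nabla u)=f(x)$ with H\"older $a,f$, whereas you pass to the non-divergence form with coefficient matrix $I+(p-2)|\nabla u|^{-2}\nabla u\otimes\nabla u$; both are standard and lead to the same conclusion.
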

\begin{proof}
First we notice that $u$ is bounded, thanks for instance to Theorem 7.1 in Chapter 4 or to Theorem 3.1 in Chapter 5 of \cite{Ladizeskaya-Ural'ceva-1968}.
Then, we have $u\in C^{1,\alpha}_{\text{loc}}(\Omega)$ by \cite[Theorem 1]{Tolksdorf}. Now, in $\Omega\setminus\overline{\C}$, $u$ solves a linear equation of the type $-\text{div}(a(x) \nabla u)= f(x)$, with $a(x)$ and $f(x)$ H\"older continuous, hence $u\in C^{2,\alpha}$ by classic Schauder's theory (see for instance \cite{GT}). Moreover, if $\Omega$ is $C^{1,\alpha}$, then $u\in C^{1,\beta}(\overline\Omega)$ by \cite[Theorem 1]{Lieberman}.
\end{proof}
\begin{remark}\label{localtoglobalsolution}
Notice that, if $u\in W^{1,p}_{\mathrm{loc}}(\Omega,\gamma)\cap C^1(\overline\Omega)$ and $u=0$ on $\partial\Omega$, then $u\in W^{1,p}_{0}(\Omega,\gamma)$. Moreover, if it is a local weak solution of the equation in \eqref{PDE1}, then it is a weak solution of  problem \eqref{PDE1}. Thanks to Proposition $\ref{prop:regularity}$, this would make the proofs of our results simpler.
\end{remark}

\subsection{Viscosity solutions}
We give here basic concepts about viscosity solutions of elliptic equations. For more details, we refer to \cite{Crandall1995, Ishii-Lions1992, Katzourakis, Koike}, and in particular to \cite{Juutinen2001}.
A function $u:\Omega \rightarrow $ is {\it upper semi-continuous} in $\Omega$ if, for every $x\in\Omega$,
\[
u(x)=\mathop{\lim}_{r\rightarrow 0^+} \sup \{u(y):y\in \Omega,\ |y-x|<r\}.
\]
A function $u:\Omega \rightarrow $ is {\it lower semi-continuous} in $\Omega$ if, for every $x\in\Omega$,
\[
u(x)=\mathop{\lim}_{r\rightarrow 0^+} \inf \{u(y):y\in \Omega,\ |y-x|<r\}.
\]
We denote by $J^{2,-}_\Omega u (x)$ the second order sub-jet of $u$ at $x$, which is by definition the set of pairs $(\xi,A)\in \R^n \times \mathcal{S}^n$ such that, as $y \rightarrow x$, $y\in \overline{\Omega}$, it holds
\begin{equation}
u(y) \geq u(x)+(\xi, y-x)+\frac{1}{2}(A(y-x),y-x)+o(|y-x|^2).
\end{equation}
The closure of a sub-jet is defined by $(\xi,A)\in \bar{J}^{2,-}_\Omega u(x)$ if there exists a sequence $(\xi_j,A_j)\in J^{2,-}_\Omega u(x_j)$ such that $(x_j,u(x_j),\xi_j,A_j)\rightarrow (x,r,\xi,A)$ with some $r\in \R$ as $j\rightarrow \infty$. Obviously, $r=u(x)$ if $u$ is continuous. The super-jet $J^{2,+}_\Omega u(x)$ and $\bar{J}^{2,+}_\Omega u(x)$ are defined by a similar way with $\geq $ replacing $\leq $.

Let $u$ and $\varphi$ be two functions defined in $\Omega$, and let $x_0\in \Omega$. We say that {\it $\varphi$ touches $u$ from above} at $x_0$ if
\[
\varphi(x_0)=u(x_0)\ \ \text{and}\ \ \varphi(x)\geq u(x),
\]
in a neighbourhood of $x_0$ (i.e., $\varphi(x_0)=u(x_0)$ and $\varphi-u$ has a local minimum at $x_0$). Similarly, we say that {\it $\varphi$ touches $u$ from below} at $x_0$, if
\[
\varphi(x_0)=u(x_0)\ \ \text{and}\ \ \varphi(x)\leq u(x),
\]
in a neighbourhood of $x_0$ (i.e., $\varphi(x_0)=u(x_0)$ and $\varphi-u$ has a local maximum at $x_0$).

For $p>1$, set
\begin{equation}
F(x,u,\nabla u, D^2 u):=-\Delta_{p,\gamma}(u)-\lambda_{p,\gamma} |u|^{p-2}u=-\Delta_p u+(x,\nabla u)|\nabla u|^{p-2}-\lambda_{p,\gamma} |u|^{p-2}u.
\end{equation}
A function $u:\Omega \rightarrow (-\infty,\infty]$ is a {\it viscosity subsolution} to $F(x,u,\nabla u, D^2 u)=0$ in $\Omega$, if
\begin{enumerate}
  \item $u$ is upper semi-continuous;
  \item $u$ is not identically $+\infty$, and
  \item for every $C^2$ function $\varphi$ {\it{touching $u$ from above}} at any point $x\in \Omega$, with $\nabla \varphi (x)\neq 0$ if $p<2$, it holds
  \[
   F(x,\varphi(x),\nabla\varphi(x),D^2 \varphi(x))=-\Delta_{p,\gamma} \varphi(x)-\lambda_{p,\gamma} |\varphi|^{p-2}\varphi\leq 0;
   \]
or, equivalently, if for any $(\xi,A)\in {J}^{2,+}_\Omega u (x)$ (or $(\xi,A)\in \bar{J}^{2,+}_\Omega u (x)$), with $\xi\neq 0$ if $p<2$, it holds $F(x,u,\xi,A)\leq 0$.
\end{enumerate}

Similarly, a lower semicontinuous function $u$ is a {\it viscosity supersolution} of the equation $F=0$, if, for every $C^2$ function $\varphi$ {\it{touching $u$ from below}} at any point $x\in \Omega$, with $\nabla \varphi(x)\neq 0$ if $p<2$, it holds
\[
F(x,\varphi(x),\nabla\varphi(x),D^2 \varphi(x))\geq 0;
\]
or, equivalently, if for any $(\xi,A)\in {J}^{2,-}_\Omega u (x)$ (or $(\xi,A)\in \bar{J}^{2,-}_\Omega u (x)$), with $\xi\neq 0$ if $p<2$, it holds $F(x,u,\xi,A)\geq 0$.

A function $u\in C(\Omega)$ is a viscosity solution of \eqref{PDE1} if it is both a viscosity subsolution and a viscosity supersolution.

 We remark that in the case $p\geq2$, the requirement $\nabla \varphi(x) \neq 0$ (equivalently $\xi\neq 0$) is not in force.

Notice that in our assumptions a classical solution is always a viscosity solution and a viscosity solution is a classical solution if it is regular enough.

\section{Viscosity solutions are weak solutions }\label{Se3}

For the proofs of this section, we will follow the lines of the papers \cite{Radulescu2024, Versa-Julin2012, Juutinen2001, MO, Siltakoski2018}.

Given an open bounded domain $\Omega$, an exponent $q>1$ and a function $u:\Omega\to\R$, for $\varepsilon>0$ we define the sup-convolution $u_{\varepsilon}$ of $u$ as follows:
\begin{equation}\label{inf-convo}
u_{\varepsilon}(x)=\mathop{\sup}_{y\in\Omega} \left( u(y)-\frac{1}{q\varepsilon^{q-1}}|x-y|^q\right),\quad x\in \Omega.
\end{equation}
Similarly, we can define the inf-convolution. The properties of inf-convolutions are well known and can be found in several references, such as the ones cited above (see in particular \cite[Appendix A]{Versa-Julin2012}) and \cite{Katzourakis, Koike}, for instance. The sup-convolution has some specific properties, which we list it in the following proposition.
\begin{proposition}\label{inf-convo-prop}
Suppose that $u:\Omega \rightarrow \R$ is a bounded and upper semi-continuous function. Then the sup-convolution $u_\varepsilon$ satisfies the following properties:
\begin{itemize}
  \item The family $\{ u_\varepsilon\}$ is decreasing with respect to $\varepsilon$, $u_\varepsilon \geq u$ in $\Omega$ and $u_\varepsilon \rightarrow u$ locally uniformly as $\varepsilon \rightarrow  0$.

   \item $u_\varepsilon$ is locally Lipschitz and a.e. twice differentiable in the following sense: for almost every $x,y\in\Omega$,
      \[
      u_\varepsilon(y)=u_\varepsilon(x)+\nabla u_\varepsilon(x)\cdot (x-y)+\frac{1}{2}D^2 u_\varepsilon (x)(x-y)^2+o(|x-y|^2).
      \]

  \item There exists $r(\varepsilon )>0$ such that
  \[
  u_\varepsilon(x)=\mathop{\sup}_{y\in B_{r(\varepsilon)}(x)\cap \Omega} \left( u(y)-\frac{|x-y|^q}{q\varepsilon^{q-1}}\right), \]
  with $r(\varepsilon)\rightarrow 0 $ as $\varepsilon \rightarrow 0$.

  \item If $x\in \Omega_{r(\varepsilon)}:=\{ x\in \Omega: \mathrm{dist}(x,\partial \Omega)>r(\varepsilon)\}$, then there exists $x_\varepsilon\in B_{r(\varepsilon)}(x)$ such that
  \[
  u_{\varepsilon}(x)=u(x_\varepsilon)-\frac{1}{q\varepsilon^{q-1}}|x-x_\varepsilon|^q.
  \]
  Moreover, if the gradient $\nabla u_\varepsilon(x)$ exists, it holds
  \[
  \left( \frac{|x-x_\varepsilon|}{\varepsilon}\right)^{q-1} \leq |\nabla u_\varepsilon(x)|.
  \]
  In particular, if $\nabla u_\varepsilon (x)=0$, then $u_\varepsilon (x)=u(x)$.

  \item If $(\eta,X)\in J^{2,+} u_\varepsilon(x)$ with $x\in \Omega_{r(\varepsilon)}$, then
  \[
  \eta=\frac{|x-x_\varepsilon|^{q-2}(x-x_\varepsilon)}{\varepsilon^{q-1}},\ \mathop{and}\ X \geq - \frac{q-1}{\varepsilon}|\eta|^{\frac{q-2}{q-1}} I;
  \]
 here $I$ is the identity matrix of order $n$.
  \item $u_\varepsilon $ is semi-convex in $\Omega_{r(\varepsilon)}$, that is, there is a positive constant $c$ such that the function
  \[
  x\mapsto u_\varepsilon (x)+c|x|^2
  \]
  is convex, where the constant $c$ depends on $\varepsilon,\,q$ and the oscillation $\sup_{\Omega}-\inf_{\Omega} u$. Moreover, for a.e. $x\in \Omega_{r(\varepsilon)}$, $D^2 u_\varepsilon(x) \geq -cI$.
\end{itemize}
\end{proposition}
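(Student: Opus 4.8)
These are the by now classical properties of sup-convolutions, so the plan is to adapt the arguments of \cite{Juutinen2001, Versa-Julin2012, MO, Siltakoski2018, Radulescu2024}, the whole thing resting on a single boundedness/localisation estimate. First I would fix $\varepsilon>0$, set $M:=\sup_\Omega u-\inf_\Omega u<\infty$, and record the two trivial facts: $u_\varepsilon\ge u$ (take $y=x$ in \eqref{inf-convo}) and the monotonicity of $\{u_\varepsilon\}$ in $\varepsilon$ (from the monotonicity of $\varepsilon\mapsto\varepsilon^{1-q}$). Next, if $x_\varepsilon$ is a near-maximiser in \eqref{inf-convo}, then from $u(x_\varepsilon)-\frac{1}{q\varepsilon^{q-1}}|x-x_\varepsilon|^q\ge u_\varepsilon(x)\ge u(x)$ one gets
\[
\frac{1}{q\varepsilon^{q-1}}|x-x_\varepsilon|^q\le u(x_\varepsilon)-u(x)\le M,
\]
hence $|x-x_\varepsilon|\le r(\varepsilon):=(qM)^{1/q}\varepsilon^{(q-1)/q}\to0$ as $\varepsilon\to 0$; this is the third bullet. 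If moreover $x\in\Omega_{r(\varepsilon)}$, the supremum in \eqref{inf-convo} may be taken over the compact set $\overline{B_{r(\varepsilon)}(x)}\subset\Omega$, on which the upper semicontinuous $u$ attains its maximum, which produces the point $x_\varepsilon$ of the fourth bullet; and the same estimate together with upper semicontinuity gives $\limsup_{\varepsilon\to0}u_\varepsilon(x)\le u(x)$, which, combined with $u_\varepsilon\ge u$, with continuity and monotonicity of $u_\varepsilon$, and with Dini's theorem, yields the local uniform convergence $u_\varepsilon\to u$.

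\textbf{Semi-convexity and a.e.\ twice differentiability.} On $\Omega_{r(\varepsilon)}$ I would rewrite $u_\varepsilon$ locally as a supremum of the building blocks $x\mapsto u(y)-\frac{1}{q\varepsilon^{q-1}}|x-y|^q$ over $|x-y|\le r(\varepsilon)$. A direct computation shows that the Hessian of $z\mapsto|z|^q$ has eigenvalues $q(q-1)|z|^{q-2}$ (radial) and $q|z|^{q-2}$ (orthogonal), so for $q\ge2$ each building block has, on $|x-y|\le r(\varepsilon)$, Hessian bounded below by $-cI$ with $c=c(\varepsilon,q,M)$. Since a supremum of uniformly semi-convex functions is semi-convex, $x\mapsto u_\varepsilon(x)+c|x|^2$ is convex on $\Omega_{r(\varepsilon)}$; this gives at once that $u_\varepsilon$ is locally Lipschitz and, by Alexandrov's theorem, twice differentiable a.e.\ in the stated sense with $D^2u_\varepsilon\ge-cI$ a.e., i.e.\ the sixth and second bullets. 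The case $1<q<2$ is the delicate one, and I expect it to be the main obstacle: there $z\mapsto-|z|^q$ fails to be semi-convex near the origin, so $u_\varepsilon$ need not be semi-convex at points where $\nabla u_\varepsilon=0$; the remedy is to work on the open set $\{\nabla u_\varepsilon\ne0\}$, where the gradient formula below keeps $x_\varepsilon$ at a positive distance from $x$ and the relevant building blocks are smooth, which is precisely the reason the viscosity definitions in Section \ref{Se2} exclude vanishing gradients when $p<2$.

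\textbf{Gradient formula and super-jets.} Fix $x\in\Omega_{r(\varepsilon)}$ with maximiser $x_\varepsilon$ and set $\phi(z):=u(x_\varepsilon)-\frac{1}{q\varepsilon^{q-1}}|z-x_\varepsilon|^q$. Choosing $y=x_\varepsilon$ in \eqref{inf-convo} shows $u_\varepsilon\ge\phi$ near $x$ with equality at $x$, i.e.\ $\phi$ touches $u_\varepsilon$ from below at $x$. Hence, wherever $\nabla u_\varepsilon(x)$ exists, $\nabla u_\varepsilon(x)=\nabla\phi(x)=-\varepsilon^{1-q}|x-x_\varepsilon|^{q-2}(x-x_\varepsilon)$, so $|\nabla u_\varepsilon(x)|=(|x-x_\varepsilon|/\varepsilon)^{q-1}$, and if this vanishes then $x_\varepsilon=x$ and $u_\varepsilon(x)=u(x)$: the fourth bullet. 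For the fifth, given $(\eta,X)\in J^{2,+}u_\varepsilon(x)$ I would pick a $C^2$ function $\psi$ touching $u_\varepsilon$ from above at $x$ with $\nabla\psi(x)=\eta$, $D^2\psi(x)=X$; the sandwich $\phi\le u_\varepsilon\le\psi$ near $x$, with equality at $x$, forces $\eta=\nabla\phi(x)$ (the stated formula, and $|\eta|=(|x-x_\varepsilon|/\varepsilon)^{q-1}$) and $X\ge D^2\phi(x)$; then the eigenvalue computation above for $D^2\phi(x)$, together with $|x-x_\varepsilon|=\varepsilon|\eta|^{1/(q-1)}$, gives $X\ge-\frac{q-1}{\varepsilon}|\eta|^{(q-2)/(q-1)}I$ for $q\ge2$ (with $q-1$ replaced by $\max\{1,q-1\}$ when $1<q<2$, the radial eigenvalue no longer being the dominant one). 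Finally, translating $\psi$ by the vector $x-x_\varepsilon$ produces a $C^2$ function touching $u$ from above at $x_\varepsilon$, which shows in addition that $(\eta,X)\in J^{2,+}u(x_\varepsilon)$ --- a fact not part of the statement but the one actually exploited in the proof of Theorem \ref{Viscosity-weak}.
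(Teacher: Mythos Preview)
The paper does not give a proof of this proposition: it is stated as well-known, with references to \cite[Appendix~A]{Versa-Julin2012}, \cite{Katzourakis}, \cite{Koike}, and the other cited sources. Your proposal supplies exactly the standard argument one finds there, and it is correct.

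Two minor remarks. First, the discussion of $1<q<2$ is unnecessary here: immediately after the proposition the paper fixes $q\ge\max\{2,p/(p-1)\}$, so $q\ge2$ throughout, and the semi-convexity is unproblematic. Second, your appeal to Dini's theorem for the locally uniform convergence tacitly uses continuity of the limit $u$; for a merely upper semicontinuous $u$ one only gets $\limsup_{y\to x,\,\varepsilon\to0}u_\varepsilon(y)\le u(x)$ together with $u_\varepsilon\ge u$, which is the form actually used in the paper (and in the applications $u$ is in any case continuous).
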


Throughout the paper, for $p>1$, we fix $q\geq \max\{2,p/{(p-1)}\}$, so that we can take $q=p/{(p-1)}$ when $1<p<2$, and $q=2$ when $p\geq 2$.

\subsection{An approximating equation for $u_\epsilon$.}
\begin{lemma}\label{Viscosity1}
Given $p>1$. Suppose that $u:\Omega \rightarrow \R$ is bounded, non-negative and upper semi-continuous function. Then, if $u$ is a viscosity sub-solution in $\Omega$ of the equation \eqref{PDE1}, then $u_\varepsilon$ is a viscosity sub-solution in $ \Omega_{r(\varepsilon)}$ of
\begin{equation}\label{viscosity-super-solution}
 -\Delta_p u +(x,\nabla u)|\nabla u|^{p-2}= f_\varepsilon(x,u,\nabla u),
\end{equation}
where
\begin{equation}\label{3-1}
f_\varepsilon(x,u_\varepsilon,\nabla u_\varepsilon):=\varepsilon |\nabla u_\varepsilon|^{p}+\lambda_{p,\gamma}\cdot{\sup}_{y\in B_{r(\varepsilon)}(x)} u_\varepsilon(y)^{p-1}.
\end{equation}
\end{lemma}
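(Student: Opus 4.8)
The plan is to exploit the defining sup-convolution structure: at a point $x \in \Omega_{r(\varepsilon)}$ where some $C^2$ function $\varphi$ touches $u_\varepsilon$ from above, one translates the touching test function back to the point $x_\varepsilon \in B_{r(\varepsilon)}(x)$ realizing the supremum in \eqref{inf-convo}, and there uses the fact that $u$ is a viscosity sub-solution of \eqref{PDE1}. First I would fix $x^0 \in \Omega_{r(\varepsilon)}$ and a $C^2$ function $\varphi$ touching $u_\varepsilon$ from above at $x^0$, with $\nabla\varphi(x^0)\neq 0$ in the singular range $1<p<2$. Let $x_\varepsilon$ be the point given by the fourth bullet of Proposition \ref{inf-convo-prop}, so that $u_\varepsilon(x^0) = u(x_\varepsilon) - \tfrac{1}{q\varepsilon^{q-1}}|x^0 - x_\varepsilon|^q$. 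The key elementary observation is that the shifted function $y \mapsto \psi(y):=\varphi(y + (x^0 - x_\varepsilon)) + \tfrac{1}{q\varepsilon^{q-1}}|x^0 - x_\varepsilon|^q$ touches $u$ from above at $x_\varepsilon$: indeed $\psi(x_\varepsilon) = \varphi(x^0) + \tfrac{1}{q\varepsilon^{q-1}}|x^0-x_\varepsilon|^q = u_\varepsilon(x^0) + \tfrac{1}{q\varepsilon^{q-1}}|x^0-x_\varepsilon|^q = u(x_\varepsilon)$, while for $y$ near $x_\varepsilon$, $\psi(y) \geq u_\varepsilon(y + x^0 - x_\varepsilon) + \tfrac{1}{q\varepsilon^{q-1}}|x^0-x_\varepsilon|^q \geq u(y)$ by the definition of the sup-convolution (taking the competitor $y$ in the supremum defining $u_\varepsilon$ at the point $y + x^0 - x_\varepsilon$). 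Moreover $\nabla\psi(x_\varepsilon) = \nabla\varphi(x^0)$ and $D^2\psi(x_\varepsilon) = D^2\varphi(x^0)$.

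Next I would feed $\psi$ into the viscosity sub-solution inequality for $u$ at $x_\varepsilon$: since $\nabla\psi(x_\varepsilon) = \nabla\varphi(x^0)$ is nonzero whenever we are in the range $p<2$ (and unrestricted for $p\geq 2$), we get
\[
-\Delta_p\varphi(x^0) + (x_\varepsilon, \nabla\varphi(x^0))|\nabla\varphi(x^0)|^{p-2} - \lambda_{p,\gamma}\, u(x_\varepsilon)^{p-1} \leq 0.
\]
The point of the argument is now to convert this inequality at $x_\varepsilon$ into one at $x^0$ up to the controlled error encoded in $f_\varepsilon$. The $p$-Laplacian term is unchanged since $D^2\varphi$ and $\nabla\varphi$ agree at the two points. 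For the drift term I would write
\[
(x_\varepsilon, \nabla\varphi(x^0)) = (x^0, \nabla\varphi(x^0)) + (x_\varepsilon - x^0, \nabla\varphi(x^0)),
\]
and bound $|(x_\varepsilon - x^0, \nabla\varphi(x^0))| \leq |x_\varepsilon - x^0|\,|\nabla\varphi(x^0)|$. Here the gradient estimate from the fourth bullet, $(|x^0 - x_\varepsilon|/\varepsilon)^{q-1} \leq |\nabla u_\varepsilon(x^0)\, x^0|$, together with $\nabla u_\varepsilon(x^0) = \nabla\varphi(x^0)$ at a touching point, gives $|x^0 - x_\varepsilon| \leq \varepsilon\,|\nabla\varphi(x^0)|^{1/(q-1)}\,|x^0|^{1/(q-1)}$. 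With the choice $q = p/(p-1)$ for $p<2$ (so $1/(q-1) = p-1$) and $q=2$ for $p\geq 2$, this translates into a bound on the drift error of the form $\varepsilon\, C\, |\nabla\varphi(x^0)|^{p}$ (absorbing the bounded factor $|x^0|$, which is bounded on $\Omega$, into the constant, or keeping it explicit — I would check which normalization matches the stated $f_\varepsilon$). Finally, for the zero-order term, $u(x_\varepsilon)^{p-1} \leq \big(\sup_{y \in B_{r(\varepsilon)}(x^0)} u_\varepsilon(y)\big)^{p-1}$ because $x_\varepsilon \in B_{r(\varepsilon)}(x^0)$ and $u \leq u_\varepsilon$; this is exactly the second term of $f_\varepsilon$ in \eqref{3-1}. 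Rearranging yields $-\Delta_p\varphi(x^0) + (x^0,\nabla\varphi(x^0))|\nabla\varphi(x^0)|^{p-2} \leq \varepsilon|\nabla\varphi(x^0)|^p + \lambda_{p,\gamma}\sup_{B_{r(\varepsilon)}(x^0)} u_\varepsilon^{p-1}$, which is precisely \eqref{viscosity-super-solution}; and the equivalent jet formulation follows since $(\nabla\varphi(x^0), D^2\varphi(x^0)) \in J^{2,+}u_\varepsilon(x^0)$ and one can pass to closures.

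The main obstacle I anticipate is getting the constants and the exact form of the error term $f_\varepsilon$ to match exactly, and in particular correctly tracking where the factor $|x^0|$ from the drift coefficient goes — whether it is absorbed (legitimate since $\Omega$ is bounded) or whether the stated inequality is to be read pointwise with an $\varepsilon$ that already incorporates $\sup_\Omega|x|$; I would reconcile this by being careful that the exponent bookkeeping $q-1 = 1/(p-1)$ for $p<2$ and $q-1=1$ for $p\geq2$ indeed produces the homogeneity $|\nabla\varphi|^p$ claimed. A secondary technical point is the treatment of critical points of $\varphi$ in the degenerate range $p\geq 2$: there the drift and $p$-Laplacian terms vanish (interpreting $\Delta_p\varphi(x^0)=0$ as recalled after \eqref{p-Lapla-form}), and the last bullet's remark that $\nabla u_\varepsilon(x^0)=0$ forces $u_\varepsilon(x^0)=u(x^0)$ lets one close the inequality directly from the sub-solution property of $u$ at $x^0$ itself. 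Finally one should note that $x^0 \in \Omega_{r(\varepsilon)}$ is used exactly to guarantee $x_\varepsilon \in \Omega$ so that $u$'s sub-solution inequality is available there, and that all shifted test functions stay inside $\Omega$ for $y$ in a small enough neighbourhood of $x_\varepsilon$.
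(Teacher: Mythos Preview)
Your translation approach is correct and genuinely more elementary than the paper's route. The paper applies the Crandall--Ishii--Lions theorem on sums (\cite[Theorem~3.2]{Ishii-Lions1992}) to the penalty function $\Phi(y,z)=|y-z|^q/(q\varepsilon^{q-1})$, producing matrices $Y\geq Z$ with $(\eta,-Y)\in\bar J^{2,+}u(x^0_\varepsilon)$ and $-Z\leq X=D^2\varphi(x^0)$, and then chains inequalities for $-\Delta_p\varphi(x^0)$ through $Y$ and $Z$ before invoking the sub-solution property of $u$. Your shift of the test function places $(\nabla\varphi(x^0),D^2\varphi(x^0))$ directly into $J^{2,+}u(x_\varepsilon)$, so the matrix comparison and the passage to closed jets are bypassed entirely; this shortcut works precisely because the penalty in \eqref{inf-convo} is translation-invariant in $x$. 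The paper's doubling-of-variables machinery is more general-purpose (it would survive penalties that are not pure shifts), but for this particular lemma your method is cleaner and reaches the same endpoint.

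The one place your write-up is looser than necessary is the drift term. You bound $(x_\varepsilon-x^0,\nabla\varphi(x^0))$ by Cauchy--Schwarz and then invoke the gradient inequality in the fourth bullet of Proposition~\ref{inf-convo-prop}, which produces the stray $|x^0|$ factor you worry about (incidentally, that bullet as printed carries a typo: the trailing ``$x$'' on the right-hand side should not be there). Use the fifth bullet instead: since $(\nabla\varphi(x^0),D^2\varphi(x^0))\in J^{2,+}u_\varepsilon(x^0)$, the vector $\eta=\nabla\varphi(x^0)$ is \emph{parallel} to $x^0-x_\varepsilon$, with the explicit relation $\eta=\varepsilon^{1-q}|x^0-x_\varepsilon|^{q-2}(x^0-x_\varepsilon)$. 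The inner product $(x^0_\varepsilon-x^0,\eta)$ is then an identity rather than an inequality, and this is exactly how the paper obtains $-(x^0_\varepsilon-x^0,\eta)|\eta|^{p-2}=\varepsilon|\eta|^{p}$ with no $|x^0|$ contamination. With this single adjustment your argument reproduces the stated $f_\varepsilon$ on the nose.
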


\begin{proof}[\bf Proof.]
First set $q=\max\{2,p/(p-1)\}$, i.e. $q=2$ if $p\geq 2$ and $q=p/(p-1)$ if $p\in(1,2)$.
Let $\varphi$ be a $C^2$ function touching $u_\varepsilon$ from above at a point $x^0\in \Omega_{r(\varepsilon)}$, that is,
\begin{equation}\label{3-4}
\varphi(x^0)=u_\varepsilon(x^0)\ \ \text{and}\ \ \varphi(x)\geq u_\varepsilon(x),
\end{equation}
in a neighbourhood of $x^0$, and $\nabla \varphi(x^0)\neq 0$. Set
\begin{equation}\label{3-5}
\eta=\nabla\varphi(x^0),\quad X=D^2 \varphi(x^0).
\end{equation}
Then $(\eta,X)\in J^{2,+}u_\varepsilon (x^0)$ and, by Proposition \ref{inf-convo-prop}, there exists $x^0_\varepsilon\in B_{r(\varepsilon)}(x^0)$ such that
\begin{equation}\label{3-2}
u_{\varepsilon}(x^0)=u(x^0_\varepsilon)-\frac{1}{q\varepsilon^{q-1}}|x^0-x^0_\varepsilon|^q ,
\end{equation}
and
\begin{equation}\label{3-3}
\eta=\frac{|x^0-x^0_\varepsilon|^{q-2}(x^0-x^0_\varepsilon)}{\varepsilon^{q-1}},\ \ X\geq - \frac{q-1}{\varepsilon^{q-1}}|x^0-x^0_\varepsilon|^{q-2} I.
\end{equation}
Consider the function
\[
\Phi(y,z)=\frac{|y-z|^q}{q\varepsilon^q},\quad y,z\in \Omega_{r(\varepsilon)} \times \Omega_{r(\varepsilon)}.
\]
By a direct calculation, we have
$$
-D_y \Phi(x^0_\varepsilon,x^0)=D_z \Phi(x_\varepsilon,x^0)=\eta,
$$
and
\[
B:=D_{yy} \Phi(x^0_\varepsilon,x^0)=\varepsilon^{1-q}|x^0_\varepsilon-x^0|^{q-4}[|x^0_\varepsilon-x^0|^2I +(q-2)(x^0_\varepsilon-x^0)\otimes (x^0_\varepsilon-x^0)],
\]
along with $D_{zz} \Phi(x^0_\varepsilon,x^0)=B$ and $D_{yz} \Phi(x^0_\varepsilon,x^0)=D_{zy} \Phi(x^0_\varepsilon,x^0)=-B$.

By \cite[Theorem 3.2]{Ishii-Lions1992}, there exist symmetric matrices $Y,Z$ such that
\begin{equation}\label{YZ}
(\eta, -Y)\in \bar{J}^{2,+}u(x^0_\varepsilon),\quad (\eta,-Z)\in \bar{J}^{2,-}\varphi (x^0),
\end{equation}
and
\[
\begin{pmatrix} -Y & 0 \\ 0 & Z\end{pmatrix} \leq D^2 \Phi(x^0_\varepsilon,x^0) +\varepsilon^{1-q}(D^2 \Phi(x^0_\varepsilon,x^0))^2,
\]
where
\begin{equation}\nonumber
D^2\Phi(x^0_\varepsilon,x_0) =
 \begin{pmatrix} D_{yy} \Phi(x^0_\varepsilon,x^0) & D_{yz} \Phi(x^0_\varepsilon,x^0) \\ D_{zy} \Phi(x^0_\varepsilon,x^0) & D_{zz} \Phi(x^0_\varepsilon,x^0)\end{pmatrix}.
\end{equation}
Thus,
\begin{equation}\nonumber
\begin{pmatrix} -Y & 0 \\ 0 & Z\end{pmatrix} \leq \begin{pmatrix} B & -B \\ -B & B\end{pmatrix}+ 2\varepsilon^{1-q} \begin{pmatrix} B^2 & -B^2 \\ -B^2 & B^2\end{pmatrix} .
\end{equation}
It follows $Y \geq Z$. Moreover, $-Z \leq X$ by \eqref{3-5} and \eqref{YZ}.

Thanks to \eqref{p-Lapla-form}, and using the above facts, we have
\begin{align*}
-\Delta_p \varphi (x^0) &=-|\nabla \varphi(x^0)|^{p-2} \Delta \varphi(x^0)-(p-2)|\nabla \varphi|^{p-4}\sum_{i,j=1}^n  \frac{\partial^2 \varphi}{\partial x_i \partial x_j}\frac{\partial \varphi}{\partial x_i} \frac{\partial \varphi}{\partial x_i}\\
&= -|\eta|^{p-2} tr(X)-(p-2)|\eta|^{p-4}(X\eta,\eta)\\
& \leq |\eta|^{p-2} tr(Z)+(p-2)|\eta|^{p-4}(Z\eta,\eta)\\
& \leq |\eta|^{p-2} tr(Y)+(p-2)|\eta|^{p-4}(Y\eta,\eta).
\end{align*}
Since $(\eta, -Y)\in \bar{J}^{2,+}u(x^0_\varepsilon)$, and $u$ is a viscosity sub-solution of the equation in \eqref{PDE1}, we have
\begin{equation}\nonumber
|\eta|^{p-2} tr(Y)+(p-2)|\eta|^{p-4}(Y\eta,\eta)+(x^0_\varepsilon,\eta)|\eta|^{p-2}-\lambda_{p,\gamma}u(x^0_\varepsilon)^{p-1}\leq 0.
\end{equation}
Therefore,
\begin{equation}\nonumber
-\Delta_p \varphi (x^0) +(x^0_\varepsilon,\eta)|\eta|^{p-2}-\lambda_{p,\gamma}u(x^0_\varepsilon)^{p-1} \leq 0.
\end{equation}
We can rewrite the latter inequality as follows:
\begin{equation}\nonumber
-\Delta_p \varphi (x^0)+(x^0,\eta)|\eta|^{p-2}  \leq -(x^0_\varepsilon-x^0,\eta)|\eta|^{p-2}+\lambda_{p,\gamma}u(x^0_\varepsilon)^{p-1}.
\end{equation}

Observe that
\[
 -(x^0_\varepsilon-x^0,\eta)|\eta|^{p-2}=\varepsilon |\eta|^p.
\]
On the other hand, by the definition of $u_\varepsilon$, we know that $u_\varepsilon \geq u$. By the fact that $x^0_\varepsilon\in B_{r(\varepsilon)}(x^0)$ and \eqref{3-4}, we have
\[
u(x^0_\varepsilon) \leq u_\varepsilon (x^0_\varepsilon)\leq {\sup}_{y\in B_{r(\varepsilon)}(x^0)} u_\varepsilon(y)\leq {\sup}_{y\in B_{r(\varepsilon)}(x^0)} \varphi(y).
\]
Therefore,
we get that
\[
-\Delta_p \varphi (x^0) +(x^0,\nabla \varphi)|\nabla \varphi|^{p-2} \varphi (x^0)\leq f_\varepsilon(x^0,\varphi,\nabla \varphi).
\]
This concludes the proof.
\end{proof}

\subsection{A Caccioppoli's estimate.}

In the next Lemma, we provide a Caccioppoli's estimate for the $L^p_{\mathrm{loc}}$-norm of the gradients of $u_\varepsilon$.

\begin{lemma}\label{Caccioppoli}
Under the same assumptions of Lemma \ref{Viscosity1}, {   If, for every nonnegative test function $\varphi\in C^{\infty}_0(\Omega)$, $u_\varepsilon$ satisfies the following inequality
\begin{equation}\label{newassumption}
\int_{\Omega}|\nabla u_\varepsilon|^{p-2}\nabla u_\varepsilon\cdot \nabla \varphi d\gamma \leq \int_\Omega  f_\varepsilon(x,u_\varepsilon,\nabla u_\varepsilon)\varphi d\gamma \,,
\end{equation}}
then, there exists a positive constant $c$, depending on $p,\,\lambda_{p,\,\gamma},\|u\|_{L^{\infty}(\Omega)},|\Omega|$,  such that for every test function $\psi\in C^{\infty}_0(\Omega_{r(\varepsilon)})$, $ 0\leq \psi \leq 1$, we have
\begin{equation}\label{Caccioppoli ineq}
\int_{\Omega} |\nabla u_\varepsilon|^p \psi^p d\gamma \leq c(1+\|\nabla \psi\|_{L^{\infty}(\Omega)}),
\end{equation}
for sufficiently small $\varepsilon>0$.
\end{lemma}

\begin{proof}[\bf Proof.]
Let $\psi \in C^{\infty}_0(\Omega_{r(\varepsilon)})$, $0\leq\psi\leq1$ and consider the test function
\[
\varphi(x):=(u_\varepsilon(x)-{\inf}_{K} u_\varepsilon  )\psi^p(x),\quad x\in \Omega.
\]
By assumption, we have
\[
\int_{\Omega}|\nabla u_\varepsilon|^{p-2}\nabla u_\varepsilon\cdot \nabla \varphi d\gamma \leq \int_\Omega  f_\varepsilon(x,u_\varepsilon,\nabla u_\varepsilon)\varphi d\gamma .
\]
By a direct calculation, the integral on the left hand side equals to
\begin{equation}\label{333}
\int_{\Omega} |\nabla u_\varepsilon|^{p-2}\nabla u_\varepsilon \cdot[ \psi^p \nabla u_\varepsilon+p\psi^{p-1}\nabla \psi ( u_\varepsilon(x)-{\inf}_{K} u_\varepsilon)]d\gamma.
\end{equation}
The Young's inequality
\[
ab \leq \delta a^q+\delta^{-1/(q-1)} b^{q'}
\]
where $\delta>0$, $q$ and $q'$ are conjugate exponents such that $1/q+1/q'=1$, implies that the absolute value of second term in \eqref{333} is bounded by
\[
\delta \int_\Omega |\nabla u_\varepsilon|^p \psi^p +\delta^{1-p} \int_\Omega p^p |\nabla \psi|^p (\mathrm{osc}_K u_\varepsilon)^pd\gamma.
\]
Therefore, we have
\begin{align}\label{3-6}
(1-\delta)\int_\Omega |\nabla u_\varepsilon|^p \psi^pd\gamma  \leq  \delta^{1-p}\int_\Omega p^p |\nabla \psi|^p (\mathrm{osc}_K u_\varepsilon)^pd\gamma+\int_\Omega  f_\varepsilon(x,u_\varepsilon,\nabla u_\varepsilon)\varphi d\gamma.
\end{align}
Observe that $\max\{|u_\varepsilon|, \mathrm{osc}_K u_\varepsilon\}\leq \|u\|_{L^{\infty}(\Omega)}$. By Proposition \ref{inf-convo-prop}, we have
\begin{align*}
\int_\Omega  f_\varepsilon(x,u_\varepsilon,\nabla u_\varepsilon)\varphi d\gamma&=\int_\Omega  (\varepsilon |\nabla u_\varepsilon|^p+\lambda_{p,\gamma}\cdot{\sup}_{y\in B_{r(\varepsilon)}(x)} u_\varepsilon(y)^{p-1} )\varphi\,d\gamma\\
& \leq \varepsilon\|u\|_{L^{\infty}(\Omega)}\int_\Omega |\nabla u_\varepsilon|^p  \psi^p  d\gamma+\lambda_{p,\gamma}\|u\|^p_{L^{\infty}(\Omega)}\cdot |\gamma(\Omega)|.
\end{align*}
Selecting $\delta <1/3$ and $\varepsilon\leq \frac{1}{3}\|u\|^{-1}_{L^{\infty}(\Omega)}$, by \eqref{3-6}, we have
\[
\int_\Omega |\nabla u_\varepsilon|^p \psi^pd\gamma  \leq c(1+\|\nabla \psi\|_{L^{\infty}(\Omega)}),
\]
where $c$ depends on $p,\lambda_{p,\gamma},\|u\|_{L^{\infty}(\Omega)},|\Omega|$, as desired.
\end{proof}

\begin{lemma}\label{weak-converges}
Under the assumptions of Lemma \ref{Viscosity1}{    and Lemma \ref{Caccioppoli}}, we have $u\in W^{1,p}_{\mathrm{loc}}(\Omega,\gamma)$, and $\nabla u_\varepsilon \rightharpoonup \nabla u$ weakly in $L^{p}(K)$ for every compact set  $K \subset\Omega$.
\end{lemma}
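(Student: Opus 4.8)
The plan is to combine the Caccioppoli estimate of Lemma \ref{Caccioppoli} with the reflexivity of $L^p$ and a diagonal extraction, and then to identify the resulting weak limit of $\nabla u_\varepsilon$ with the distributional gradient of $u$, exploiting that $u_\varepsilon\to u$ locally uniformly (Proposition \ref{inf-convo-prop}). Since $1<p<\infty$, once we have a bound on $\nabla u_\varepsilon$ in $L^p$ on compact subsets which is uniform in small $\varepsilon$, weak compactness will do the rest.

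First I would fix a standard exhaustion $K_1\subset\mathrm{int}\,K_2\subset K_2\subset\cdots$ of $\Omega$ by compact sets with $\bigcup_m K_m=\Omega$, and for each $m$ choose $\psi_m\in C^\infty_0(\Omega)$ with $0\le\psi_m\le1$ and $\psi_m\equiv1$ on $K_m$. Because $r(\varepsilon)\to0$ as $\varepsilon\to0$, for every $m$ there is $\varepsilon_m>0$ such that $\mathrm{spt}\,\psi_m\subset\Omega_{r(\varepsilon)}$ whenever $0<\varepsilon<\varepsilon_m$; hence Lemma \ref{Caccioppoli} applies with $\psi=\psi_m$ and yields
\[
\int_{K_m}|\nabla u_\varepsilon|^p\,d\gamma\le\int_\Omega|\nabla u_\varepsilon|^p\psi_m^p\,d\gamma\le c\bigl(1+\|\nabla\psi_m\|_{L^\infty(\Omega)}\bigr)=:C_m,\qquad 0<\varepsilon<\varepsilon_m,
\]
with $C_m$ independent of $\varepsilon$. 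Moreover $0\le u\le u_\varepsilon\le\sup_\Omega u$ by Proposition \ref{inf-convo-prop}, so $\{u_\varepsilon\}$ is bounded in $L^\infty(\Omega)$, hence in $L^p(K_m)$. Since on the bounded set $K_m$ the Gaussian density has positive upper and lower bounds, $L^p(K_m,\gamma)=L^p(K_m)$ with equivalent norms, and therefore $\{u_\varepsilon\}$ is bounded in $W^{1,p}(K_m)$ uniformly for small $\varepsilon$.

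Next I would pass to the limit. By reflexivity of $L^p$ and a diagonal argument over $m$, from any sequence $\varepsilon\to0$ one extracts a subsequence $\varepsilon_j\to0$ and a field $V\in L^p_{\mathrm{loc}}(\Omega;\R^n)$ with $\nabla u_{\varepsilon_j}\rightharpoonup V$ weakly in $L^p(K_m)$ for every $m$. To identify $V$, I would use that $u_{\varepsilon_j}\to u$ locally uniformly, in particular in $L^1_{\mathrm{loc}}(\Omega)$: for every $\phi\in C^\infty_0(\Omega;\R^n)$,
\[
\int_\Omega V\cdot\phi\,dx=\lim_{j\to\infty}\int_\Omega\nabla u_{\varepsilon_j}\cdot\phi\,dx=-\lim_{j\to\infty}\int_\Omega u_{\varepsilon_j}\,\mathrm{div}\,\phi\,dx=-\int_\Omega u\,\mathrm{div}\,\phi\,dx,
\]
so $V$ is the distributional gradient of $u$. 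Hence $u\in W^{1,p}_{\mathrm{loc}}(\Omega)=W^{1,p}_{\mathrm{loc}}(\Omega,\gamma)$ and $\nabla u=V$. As the limit is the same for every subsequence, the standard subsequence criterion in the reflexive space $L^p(K)$ gives $\nabla u_\varepsilon\rightharpoonup\nabla u$ weakly in $L^p(K)$ for every compact $K\subset\Omega$ as $\varepsilon\to0$.

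The genuinely analytic content, namely the uniform $L^p_{\mathrm{loc}}$ bound on the gradients, is already contained in Lemma \ref{Caccioppoli}, so I do not expect a real obstacle here; the remaining steps are soft functional analysis. The only point deserving care is that the Caccioppoli inequality is stated for cut-offs supported in the shrinking set $\Omega_{r(\varepsilon)}$, so one must fix the cut-off first and only then restrict to $\varepsilon$ small enough that its support lies inside $\Omega_{r(\varepsilon)}$; this is precisely what makes the constants $C_m$ uniform in $\varepsilon$.
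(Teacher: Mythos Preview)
Your proof is correct and follows essentially the same route as the paper: use the Caccioppoli estimate (Lemma \ref{Caccioppoli}) to get a uniform $L^p_{\mathrm{loc}}$ bound on $\nabla u_\varepsilon$, invoke weak compactness in the reflexive space $L^p$, and identify the limit via the pointwise convergence $u_\varepsilon\to u$ from Proposition \ref{inf-convo-prop}. In fact your treatment is more explicit than the paper's on two points---the integration-by-parts identification of the weak limit with the distributional gradient of $u$, and the subsequence criterion yielding convergence of the full family---both of which the paper leaves implicit.
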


\begin{proof}[\bf Proof.]
The just proved Caccioppoli's estimate \eqref{Caccioppoli ineq} applied to $u_\varepsilon$ in \eqref{viscosity-super-solution}, allows us to conclude that
\[
|\nabla u_\varepsilon|^{p-2}\nabla u_\varepsilon
\]
converges weakly in $L^{p/(p-1)}_{\mathrm{loc}}(\Omega,\gamma)$. Indeed, for any compact set $K\subset \Omega$, choose
an open set $U\subset \Omega$ containing $K$ and
a non-negative test function $0\leq \psi \leq 1$ such that
\[
K\subset K':=\mathrm{spt} \psi \subset U,
\]
and $\psi=1$ in $K$. Then
\[
\int_K ||\nabla u_\varepsilon|^{p-2}\nabla u_\varepsilon |^{p/(p-1)} d\gamma= \int_K |\nabla u_\varepsilon|^{p}d\gamma \leq \int_\Omega |\nabla u_\varepsilon|^p \psi^p d\gamma.
\]
So, by Lemma \ref{Caccioppoli}, we can find an uniform bound for the integrals
\[
\int_K ||\nabla u_\varepsilon|^{p-2}\nabla u_\varepsilon |^{p/(p-1)} d\gamma,\quad \int_\Omega |\nabla u_\varepsilon|^p d\gamma.
\]
Hence, $|\nabla u_\varepsilon|^{p-2}\nabla u_\varepsilon$ converges weakly in $L_{\mathrm{loc}}^{p/(p-1)}(\Omega,\gamma)$, and $\nabla u_\varepsilon $ converges weakly in $L_{\mathrm{loc}}^p(\Omega,\gamma )$. By Proposition \ref{inf-convo-prop}, we know that $u_\varepsilon$ converges pointwise to $u$. Therefore, we infer that $u\in W^{1,p}_{\mathrm{loc}}(\Omega,\gamma)$, and $u_\varepsilon$ converges weakly in $W^{1,p}_{\mathrm{loc}}(\Omega,\gamma)$.
\end{proof}

\begin{lemma}\label{error-term}
Under the assumptions of Lemma \ref{Viscosity1} {    and Lemma \ref{Caccioppoli}}, for any function $\psi \in C^{\infty}_0(\Omega)$,
\begin{equation}\nonumber
\mathop{\lim}_{\varepsilon \rightarrow 0} \int_{\Omega} f_\varepsilon(x,u_\varepsilon,\nabla u_\varepsilon) \psi d\gamma=\lambda_{p,\gamma}\int_\Omega u^{p-1}  \psi d\gamma.
\end{equation}
\end{lemma}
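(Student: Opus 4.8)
The plan is to split $f_\varepsilon$ into its two pieces and show that the first one vanishes in the limit while the second one converges to the desired integral. Recall from \eqref{3-1} that
$$
f_\varepsilon(x,u_\varepsilon,\nabla u_\varepsilon)=\varepsilon|\nabla u_\varepsilon|^{p}+\lambda_{p,\gamma}\cdot\sup_{y\in B_{r(\varepsilon)}(x)}u_\varepsilon(y)^{p-1}.
$$
For the first term, fix $\psi\in C^\infty_0(\Omega)$ and let $K=\mathrm{spt}\,\psi$, a compact subset of $\Omega$. For $\varepsilon$ small enough $K\subset\Omega_{r(\varepsilon)}$, so by the Caccioppoli estimate \eqref{Caccioppoli ineq} of Lemma \ref{Caccioppoli} (applied with a cutoff $\psi_K$ that equals $1$ on $K$) the quantity $\int_K|\nabla u_\varepsilon|^p\,d\gamma$ is bounded by a constant independent of $\varepsilon$; hence $\varepsilon\int_\Omega|\nabla u_\varepsilon|^p|\psi|\,d\gamma\le\varepsilon\,\|\psi\|_{L^\infty}\int_K|\nabla u_\varepsilon|^p\,d\gamma\to 0$.

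For the second term I would use the pointwise convergence $u_\varepsilon\to u$ together with the uniform bound $0\le u_\varepsilon\le\|u\|_{L^\infty(\Omega)}$ from Proposition \ref{inf-convo-prop}, and dominated convergence. The only subtlety is the presence of the supremum over the shrinking ball $B_{r(\varepsilon)}(x)$. Set $g_\varepsilon(x):=\sup_{y\in B_{r(\varepsilon)}(x)}u_\varepsilon(y)^{p-1}$. These functions are uniformly bounded by $\|u\|_{L^\infty(\Omega)}^{p-1}$, so by dominated convergence it suffices to show $g_\varepsilon(x)\to u(x)^{p-1}$ for a.e.\ $x\in\Omega$ (indeed for every $x$ at which $u$ is continuous, and $u$ will turn out continuous here, being the locally uniform limit of the $u_\varepsilon$ — but even upper semicontinuity of $u$ plus the monotone convergence $u_\varepsilon\downarrow u$ suffices for the one-sided estimates below). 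The liminf bound is immediate: $g_\varepsilon(x)\ge u_\varepsilon(x)^{p-1}\to u(x)^{p-1}$. For the limsup bound, pick $y_\varepsilon\in\overline{B_{r(\varepsilon)}(x)}$ realizing (up to $o(1)$) the supremum of $u_\varepsilon^{p-1}$; since $u_\varepsilon$ is decreasing in $\varepsilon$, for $\varepsilon<\varepsilon_0$ we have $u_\varepsilon(y_\varepsilon)\le u_{\varepsilon_0}(y_\varepsilon)$, and as $y_\varepsilon\to x$ and $u_{\varepsilon_0}$ is continuous, $\limsup_{\varepsilon\to0}g_\varepsilon(x)\le u_{\varepsilon_0}(x)^{p-1}$; now let $\varepsilon_0\to0$ and use $u_{\varepsilon_0}(x)\to u(x)$. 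Thus $g_\varepsilon(x)\to u(x)^{p-1}$ pointwise, and
$$
\lim_{\varepsilon\to0}\lambda_{p,\gamma}\int_\Omega g_\varepsilon(x)\,\psi\,d\gamma=\lambda_{p,\gamma}\int_\Omega u^{p-1}\psi\,d\gamma
$$
by dominated convergence on the bounded domain $\Omega$ (the Gaussian density being bounded there).

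The main obstacle, such as it is, is handling the $\sup_{B_{r(\varepsilon)}(x)}$ cleanly: one must exploit the monotonicity of the family $\{u_\varepsilon\}$ in $\varepsilon$ rather than trying to pass a limit directly inside a supremum over a moving ball. Everything else is a routine combination of the Caccioppoli bound from Lemma \ref{Caccioppoli} and the dominated convergence theorem on the bounded set $\Omega$. Combining the two terms gives the claimed identity.
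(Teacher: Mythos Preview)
Your proposal is correct and follows essentially the same approach as the paper: split $f_\varepsilon$ into the gradient term and the sup term, kill the first with the Caccioppoli bound, and handle the second by dominated convergence using $|u_\varepsilon|\le\|u\|_{L^\infty}$. Your argument is in fact more carefully written than the paper's on one point: the paper simply asserts the pointwise convergence $f_\varepsilon\to\lambda_{p,\gamma}u^{p-1}$ citing Proposition~\ref{inf-convo-prop}, whereas you give the clean limsup/liminf argument for $g_\varepsilon(x)=\sup_{B_{r(\varepsilon)}(x)}u_\varepsilon^{p-1}\to u(x)^{p-1}$ using the monotonicity of $\{u_\varepsilon\}$ in $\varepsilon$ and the continuity of each $u_{\varepsilon_0}$.
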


\begin{proof}[\bf Proof.] Let $\psi \in C^{\infty}_0(\Omega)$ and set $K:=\mathrm{spt}(\psi)$. Since we can write $\psi=\psi^{+}-\psi^{-}$, it is enough to prove the statement for $\psi\geq 0$. Consider $\varepsilon>0$ small enough so that
\[
K \subset K' \subset \Omega,
\]
where $K':= \overline{\cup_{x\in K}B_{r(\varepsilon)}(x)}$ and $r(\varepsilon)$ is given by Proposition \ref{inf-convo-prop}.

By Lemma \ref{weak-converges}, there exists a uniform upper bound of $\nabla u_\varepsilon$ in $L^{p}(K,\gamma)$, that is,
\begin{equation}\label{3-8}
\int_{\Omega}  |\nabla u_\varepsilon|^{p}\psi d\gamma \leq c,
\end{equation}
for all $\varepsilon \in (0,\varepsilon_0)$, where $c>0$ is a constant which is independent of $\varepsilon$. By the definition of $u_\varepsilon$, we have
\begin{equation}\label{3-9}
|u_\varepsilon|\leq \|u\|_{L^{\infty}(\Omega)}<+\infty.
\end{equation}
{  By
Proposition \ref{inf-convo-prop}, we have
\begin{equation}\label{3-10}
\mathop{\lim}_{\varepsilon \rightarrow 0}  \left({\sup}_{y\in B_{r(\varepsilon)}(x)} u_\varepsilon(y)^{p-1}\right) = u(x)^{p-1}\quad \mathrm{in}\ \Omega.
\end{equation}}
Hence, by the definition of $f_\varepsilon$, and \eqref{3-8}, \eqref{3-9}, \eqref{3-10} and Lebesgue dominated convergence theorem, we obtain
\[
\mathop{\lim}_{\varepsilon \rightarrow 0} \int_{\Omega} f_\varepsilon(x,u_\varepsilon,\nabla u_\varepsilon) \psi d\gamma=\lambda_{p,\gamma} \int_{\Omega}u^{p-1}\psi d\gamma.
\]
\end{proof}

\subsection{Viscosity solutions are weak solutions}

\begin{theorem}\label{Viscosity-weak}
Let $p>1$ and assume $u:\Omega \rightarrow \R$ is bounded, non-negative and upper semi-continuous. If $u$ is a viscosity sub-solution of the equation in \eqref{PDE1}, then it is also a local weak sub-solution of the same equation.
\end{theorem}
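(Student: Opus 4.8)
The plan is to follow the route pioneered by Juutinen--Lindqvist--Manfredi and streamlined by Julin--Juutinen and the other works cited at the start of this section. I would fix a non-negative $\varphi\in C^\infty_0(\Omega)$, set $K=\mathrm{spt}(\varphi)$, and work with $\varepsilon$ so small that $K\subset\Omega_{r(\varepsilon)}$ and $K':=\overline{\bigcup_{x\in K}B_{r(\varepsilon)}(x)}\subset\Omega$. By Lemma \ref{Viscosity1}, $u_\varepsilon$ is a viscosity subsolution in $\Omega_{r(\varepsilon)}$ of \eqref{viscosity-super-solution}. Since, by Proposition \ref{inf-convo-prop}, $u_\varepsilon$ is locally Lipschitz and semiconvex, Alexandrov's theorem gives that $u_\varepsilon$ is twice differentiable at a.e.\ point $x$, with $(\nabla u_\varepsilon(x),D^2u_\varepsilon(x))\in J^{2,+}u_\varepsilon(x)$; feeding this into the viscosity inequality (when $1<p<2$ one argues at the a.e.\ points where $\nabla u_\varepsilon(x)\neq0$, which suffices after integration by parts since the flux $|\nabla u_\varepsilon|^{p-2}\nabla u_\varepsilon$ vanishes on $\{\nabla u_\varepsilon=0\}$) yields the pointwise inequality $-\Delta_p u_\varepsilon+(x,\nabla u_\varepsilon)|\nabla u_\varepsilon|^{p-2}\le f_\varepsilon(x,u_\varepsilon,\nabla u_\varepsilon)$ for a.e.\ $x$. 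Multiplying by a non-negative test function and integrating by parts with respect to $\gamma$ — exactly the computation already carried out in the proof of Lemma \ref{Caccioppoli}, in which the Ornstein--Uhlenbeck drift term cancels the weight-derivative term — produces, for every non-negative $\varphi\in C^\infty_0(\Omega_{r(\varepsilon)})$,
\[
\int_\Omega |\nabla u_\varepsilon|^{p-2}\nabla u_\varepsilon\cdot\nabla\varphi\,d\gamma-\int_\Omega f_\varepsilon(x,u_\varepsilon,\nabla u_\varepsilon)\,\varphi\,d\gamma\le 0 .
\]

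Then I would let $\varepsilon\to0$. By Lemma \ref{Caccioppoli}, $\{\nabla u_\varepsilon\}_\varepsilon$ is bounded in $L^p(K',\gamma)$; by Lemma \ref{weak-converges}, $u\in W^{1,p}_{\mathrm{loc}}(\Omega,\gamma)$ and $\nabla u_\varepsilon\rightharpoonup\nabla u$ weakly in $L^p_{\mathrm{loc}}(\Omega,\gamma)$, so in particular $|\nabla u_\varepsilon|^{p-2}\nabla u_\varepsilon$ is bounded in the reflexive space $L^{p/(p-1)}_{\mathrm{loc}}(\Omega,\gamma)$; and by Lemma \ref{error-term} the second integral above converges to $\lambda_{p,\gamma}\int_\Omega u^{p-1}\varphi\,d\gamma$. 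Once I also know that the first, nonlinear, integral converges to $\int_\Omega|\nabla u|^{p-2}\nabla u\cdot\nabla\varphi\,d\gamma$, passing to the limit gives $\int_\Omega|\nabla u|^{p-2}\nabla u\cdot\nabla\varphi\,d\gamma-\lambda_{p,\gamma}\int_\Omega|u|^{p-2}u\,\varphi\,d\gamma\le0$ for every non-negative $\varphi\in C^\infty_0(\Omega)$ (here we used $u\ge0$, so $u^{p-1}=|u|^{p-2}u$), which is exactly the assertion that $u$ is a local weak subsolution.

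The heart of the matter — and the step I expect to be the main obstacle — is the passage to the limit in $\int_\Omega|\nabla u_\varepsilon|^{p-2}\nabla u_\varepsilon\cdot\nabla\varphi\,d\gamma$: weak convergence of $\nabla u_\varepsilon$ does not survive the nonlinearity $\xi\mapsto|\xi|^{p-2}\xi$, so I need to upgrade it. I would do this by a monotonicity (Minty-type) argument, exploiting that $u_\varepsilon\ge u$ (Proposition \ref{inf-convo-prop}): testing the displayed weak inequality with the admissible non-negative function $\varphi=(u_\varepsilon-u)\psi^p$, where $0\le\psi\le1$ is a cut-off equal to $1$ on $K$, one finds that the right-hand side tends to $0$ (because $u_\varepsilon\to u$ uniformly on compact sets while $f_\varepsilon$ stays bounded in $L^1_{\mathrm{loc}}$), and the lower-order term generated by $\nabla\psi$ tends to $0$ as well; together with $\nabla u_\varepsilon\rightharpoonup\nabla u$ this forces
\[
\int_\Omega\bigl(|\nabla u_\varepsilon|^{p-2}\nabla u_\varepsilon-|\nabla u|^{p-2}\nabla u\bigr)\cdot(\nabla u_\varepsilon-\nabla u)\,\psi^p\,d\gamma\longrightarrow0 .
\]
Since the integrand is non-negative by the strict monotonicity of $\xi\mapsto|\xi|^{p-2}\xi$, this yields $\nabla u_\varepsilon\to\nabla u$ strongly in $L^p_{\mathrm{loc}}(\Omega,\gamma)$ (directly for $p\ge2$, via Hölder together with the uniform $L^p$ gradient bound for $1<p<2$), hence $|\nabla u_\varepsilon|^{p-2}\nabla u_\varepsilon\to|\nabla u|^{p-2}\nabla u$ in $L^{p/(p-1)}_{\mathrm{loc}}(\Omega,\gamma)$, and the desired convergence follows. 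An alternative to the monotonicity trick is to extract a.e.\ convergence of the gradients directly from the semiconvexity of $u_\varepsilon$ (the convex functions $x\mapsto u_\varepsilon(x)+c|x|^2$ converge locally uniformly, so their gradients converge a.e.), which combined with the $L^{p/(p-1)}_{\mathrm{loc}}$ bound again gives the claim. A subordinate technical point — the rigorous justification of the integration by parts for the degenerate/singular operator applied to the merely semiconvex $u_\varepsilon$, and, when $1<p<2$, the handling of the critical set $\{\nabla u_\varepsilon=0\}$ — is the same one already addressed in the proof of Lemma \ref{Caccioppoli} and in the references cited at the beginning of this section.
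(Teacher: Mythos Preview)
Your overall route is the standard Julin--Juutinen template and is correct, but the emphasis is almost the mirror image of the paper's. The paper spends nearly all of its effort on what you call the ``subordinate technical point'': turning the pointwise a.e.\ inequality for $u_\varepsilon$ into the weak inequality \eqref{4-10}. This is \emph{not} simply the computation in Lemma~\ref{Caccioppoli} (which takes the integration by parts for granted); rather, the paper mollifies the convex function $\varphi_\varepsilon=u_\varepsilon+c|x|^2$, integrates by parts for the smooth approximants $u_{\varepsilon,j}$, and passes to the limit $j\to\infty$ via Fatou, using a one-sided bound on $\Delta_p u_{\varepsilon,j}$. For $1<p<2$ this requires an additional $\delta$-regularization, and crucially the choice $q>p/(p-1)$ in the sup-convolution, which yields $D^2u_\varepsilon\ge -\tfrac{q-1}{\varepsilon}|\nabla u_\varepsilon|^{(q-2)/(q-1)}I$ and hence a lower bound on $\mathrm{div}((|\nabla u_\varepsilon|^2+\delta)^{(p-2)/2}\nabla u_\varepsilon)$ independent of $\delta$, even on $\{\nabla u_\varepsilon=0\}$. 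Your remark that ``the flux vanishes on $\{\nabla u_\varepsilon=0\}$'' does not address this: the flux is fine, but its \emph{divergence} is singular there for $p<2$, and the pointwise inequality is only available off that set. So this step is not disposable by a reference to Lemma~\ref{Caccioppoli}; it is where the specific choice of $q$ earns its keep.

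Conversely, the step you do treat carefully, the passage $\varepsilon\to0$ through the nonlinearity $\xi\mapsto|\xi|^{p-2}\xi$, is precisely what the paper hurries past by citing Lemmas~\ref{weak-converges} and~\ref{error-term}. Your Minty-type argument, testing with $(u_\varepsilon-u)\psi^p\ge0$ and using $u_\varepsilon\to u$ uniformly on compacta together with the Caccioppoli bound, is correct and gives $\nabla u_\varepsilon\to\nabla u$ strongly in $L^p_{\mathrm{loc}}$, hence the desired convergence of $|\nabla u_\varepsilon|^{p-2}\nabla u_\varepsilon$. Your proposed \emph{alternative}, however, does not work as written: the semiconvexity constants $c=c(\varepsilon)$ diverge as $\varepsilon\to0$ (e.g.\ $c=\tfrac{1}{2\varepsilon}$ when $q=2$), so the convex functions $u_\varepsilon+c(\varepsilon)|x|^2$ do not converge, and no a.e.\ gradient convergence can be read off from them.
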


\begin{proof}[\bf Proof ]
The case $p=2$ is treated in \cite{Andrea-Paolo2024} and regarding in particular this theorem one can refer to \cite{Ishii}. We will then consider $p\neq 2$ only.

{\bf Case 1. (degenerate case $p> 2$).} Let  $u_{\varepsilon}$ be the standard sup-convolution, defined in \eqref{inf-convo} with $q=2$.
From Proposition \ref{inf-convo-prop},
we know that $\{u_\varepsilon\}$ is a decreasing family of semi-convex viscosity sub-solutions to \eqref{viscosity-super-solution} in $\Omega_{r(\varepsilon)}$, which converge pointwise to $u$, as $\varepsilon\rightarrow 0$. In particular, the function
\begin{equation}\label{varphi}
x\mapsto \varphi_\varepsilon(x):=u_\varepsilon (x)+\frac{1}{2\varepsilon}|x|^2
\end{equation}
is convex in $\Omega_{r(\varepsilon)}$.

By Aleksandrov's theorem, $u_\varepsilon$ is twice differentiable a.e. in $\Omega_{r(\varepsilon)}$, hence formula \eqref{p-Lapla-form} holds at any point of twice differentiability and, owing to Lemma \ref{Viscosity1}, we have
\begin{equation}\label{4-9-1}
-\Delta_p u_\varepsilon +(x,\nabla u_\varepsilon(x))|\nabla u_\varepsilon(x)|^{p-2}\leq f_\varepsilon(x,u_\varepsilon,\nabla u_\varepsilon),
\end{equation}
almost everywhere in $\Omega_{r(\varepsilon)}$.
Now, let us fix any non-negative function $\psi \in C^{\infty}_0(\Omega)$ and let $\varepsilon$ be small enough so that $\text{spt}(\psi)\subset\Omega_{r(\varepsilon)}$. Next we show {  \eqref{newassumption}, that is}
\begin{equation}\label{4-10}
\int_{\Omega} |\nabla u_\varepsilon|^{p-2} \nabla {u_\varepsilon} \cdot \nabla\psi d\gamma-\int_{\Omega}f_\varepsilon(x,u_\varepsilon,\nabla u_\varepsilon) \psi d\gamma \leq 0\,.
\end{equation}
Indeed, let $\varphi_j$ be a sequence of smooth convex functions, obtained via standard mollification, converging to the function $\varphi_\varepsilon$ defined in \eqref{varphi}, and set $u_{\varepsilon,j}=\varphi_j-\frac{1}{2\varepsilon}|x|^2$. We observe that integration by parts gives
\[
\int_{\Omega} \left[|\nabla u_{\varepsilon,j}|^{p-2} \nabla u_{\varepsilon,j} \cdot \nabla\psi -(x,\nabla u_{\varepsilon,j})|\nabla u_{\varepsilon,j}|^{p-2} \cdot \psi\right] d\gamma=\int_\Omega (-\Delta_p u_{\varepsilon,j})\psi d\gamma.
\]
Since $u_\varepsilon$ is locally Lipschitz continuous, we have
\[
\int_\Omega |\nabla u_\varepsilon|^{p-2} \nabla u_\varepsilon \cdot \nabla \psi d\gamma = \mathop{\lim}_{j\rightarrow \infty} \int_\Omega |\nabla u_{\varepsilon,j}|^{p-2} \nabla u_{\varepsilon,j} \cdot \nabla\psi d\gamma,
\]
and
\[
\int_{\Omega} (x,\nabla u_\varepsilon)|\nabla u_\varepsilon|^{p-2}\psi d\gamma = \mathop{\lim}_{j\rightarrow \infty} \int_{\Omega} (x,\nabla u_{\varepsilon,j})|\nabla u_{\varepsilon,j}|^{p-2}\psi d\gamma.
\]
On the other hand, since $D^2 u_{\varepsilon,j}\geq- \frac{1}{\varepsilon} I$ and $\nabla u_{\varepsilon,j}$ is locally bounded, we have
\[
\Delta_p u_{\varepsilon,j} \geq -\frac{c_1}{\varepsilon},
\]
in the support set of $\psi$, where $c_1>0$ is a constant depending only on $p$ and $n$. Thus, by Fatou's Lemma, we have
\[
\mathop{\lim \inf}_{j\rightarrow \infty} \int_\Omega\Delta_p u_{\varepsilon,j} \psi d\gamma \geq  \int_\Omega \mathop{\lim \inf}_{j\rightarrow \infty}\Delta_p u_{\varepsilon,j} \psi d\gamma.
\]
Moreover, it is shown in \cite[p.242]{Evans1992}, that $D^2\varphi_j(x)\rightarrow D^2\varphi(x)$ for a.e. $x$, and then
\[
\mathop{\lim \inf }_{j\rightarrow \infty} \Delta_p u_{\varepsilon,j} (x)=\Delta_p u_\varepsilon (x),
\]
for a.e $x\in \Omega$.

Finally, using integration by parts, convergence properties above and \eqref{4-9-1}, we have
\begin{align*}
&\int_{\Omega} \left[|\nabla u_{\varepsilon}|^{p-2} \nabla u_{\varepsilon} \cdot \nabla \psi -(x,\nabla u_{\varepsilon})|\nabla u_{\varepsilon}|^{p-2}\right] \cdot \psi d\gamma \\
&\quad=\mathop{\lim}_{j\rightarrow \infty}\int_{\Omega} \left[|\nabla u_{\varepsilon,j}|^{p-2} \nabla u_{\varepsilon,j} \cdot \nabla \psi -(x,\nabla u_{\varepsilon,j})|\nabla u_{\varepsilon,j}|^{p-2}\right] \cdot \psi d\gamma\\
&\quad =-\mathop{\lim }_{j\rightarrow \infty} \int_{\Omega} \Delta_p u_{\varepsilon,j} \psi d\gamma\\
&\quad \leq -\int_{\Omega}\mathop{\lim \inf}_{j\rightarrow \infty} \Delta_p u_{\varepsilon,j} \psi d\gamma\\
&\quad =\int_\Omega (-\Delta_p u_\varepsilon) \psi d\gamma\\
&\quad \leq -\int_\Omega (x,\nabla u_\varepsilon(x))|\nabla u_\varepsilon(x)|^{p-2}\psi d\gamma-\int_{\Omega}f_\varepsilon \psi d\gamma.
\end{align*}
Therefore, \eqref{4-10} holds. By Lemma \ref{weak-converges} and Lemma \ref{error-term}, letting $\varepsilon \rightarrow 0$ in \eqref{4-10}, we have
\[
\int_\Omega |\nabla u|^{p-2} \nabla {u} \cdot \nabla \psi d\gamma - \lambda_{p,\gamma} \int_{\Omega} |u|^{p-2} u \psi d\gamma\leq 0\,.
\]
For the arbitrariness of $\psi \in C^{\infty}_0(\Omega)$, this completes the proof.

{\bf Case 2. (Singular case $1<p<2$).} Let $u_\varepsilon$ be the sup-convolution of $u$, defined in \eqref{inf-convo} with $q>p/{(p-1)}$.
As in the degenerate case, we know from Proposition \ref{inf-convo-prop}
that $\{u_\varepsilon\}$ is a decreasing family of semi-convex viscosity sub-solutions to \eqref{viscosity-super-solution} in $\Omega_{r(\varepsilon)}$, which converge pointwise to $u$, as $\varepsilon\rightarrow 0$. In particular, there is $c$, depending on $\varepsilon,\,q$ and the oscillation $\sup_{\Omega}-\inf_{\Omega} u$, such that the function
\begin{equation}\label{varphiq}
x\mapsto \varphi_\varepsilon(x):=u_\varepsilon (x)+c|x|^2
\end{equation}
is convex in $\Omega_{r(\varepsilon)}$.
Hence, by Aleksandrov's theorem, $u_\varepsilon$ is twice differentiable a.e., and by Lemma \ref{Viscosity1}, we have
\begin{align}\label{3-11}
-\Delta_{p,\gamma} u_\varepsilon&=-\mathrm{div}(|\nabla u_\varepsilon|^{p-2}\nabla u_\varepsilon)+(x,\nabla u_\varepsilon)|\nabla u_\varepsilon|^{p-2}\nonumber\\
&=-|\nabla u_\varepsilon|^{p-2} \Delta u_\varepsilon-(p-2)|\nabla u_\varepsilon|^{p-4}\sum_{i,j=1}^n  \frac{\partial^2 u}{\partial x_i \partial x_j}\frac{\partial u_\varepsilon}{\partial x_i} \frac{\partial u_\varepsilon}{\partial x_j}+(x,\nabla u_\varepsilon)|\nabla u_\varepsilon|^{p-2}\nonumber\\
&\leq f_\varepsilon(x,u_\varepsilon,\nabla u_\varepsilon),
\end{align}
a.e. in $\Omega_{r(\varepsilon)}\setminus \{\nabla u_\varepsilon=0 \}$.

First, we show that
\begin{equation}\label{3-12}
\int_\Omega (|\nabla u_\varepsilon|^2+\delta)^{\frac{p-2}{2}}( \nabla u_\varepsilon \cdot \nabla \psi-(x,\nabla u_\varepsilon)\psi) d\gamma \leq \int_{\Omega}-\mathrm{div} \left((|\nabla u_\varepsilon|^2+\delta)^{\frac{p-2}{2}} \nabla u_\varepsilon\right)  \psi d\gamma
\end{equation}
for any non-negative $\psi\in C^{\infty}_0(\Omega_{r(\varepsilon)})$, and for every $\delta>0$ (adding the constant $\delta$ is necessary due to the singularity).
Indeed, let us fix a non-negative function $\psi \in C^{\infty}_0(\Omega_{r(\varepsilon)})$, and let $\varphi_j$ be a sequence of smooth convex functions converging to $\varphi_\varepsilon$ defined in \eqref{varphiq}, obtained via standard mollification. We denote $u_{\varepsilon,j}=\varphi_j-c|x|^2$ and observe that  an integration by parts gives
\[
\int_{\Omega}  (|\nabla u_{\varepsilon,j}|^2+\delta)^{\frac{p-2}{2}} (\nabla u_{\varepsilon,j} \cdot \nabla\psi  -(x,\nabla u_{\varepsilon,j}) \psi) d\gamma=-\int_\Omega \mathrm{div} \left((|\nabla u_{\varepsilon,j}|^2+\delta)^{\frac{p-2}{2}} \nabla u_{\varepsilon,j}\right) \psi d\gamma.
\]
Since $u_\varepsilon$ is locally Lipschitz continuous, we have
\[
\int_\Omega (|\nabla u_{\varepsilon}|^2+\delta)^{\frac{p-2}{2}} \nabla u_{\varepsilon} \cdot \nabla \psi d\gamma = \mathop{\lim}_{j\rightarrow \infty} \int_\Omega (|\nabla u_{\varepsilon,j}|^2+\delta)^{\frac{p-2}{2}} \nabla u_{\varepsilon,j} \cdot \nabla\psi d\gamma,
\]
and
\[
\int_{\Omega} (x,\nabla u_\varepsilon)(|\nabla u_{\varepsilon}|^2+\delta)^{\frac{p-2}{2}}\psi d\gamma = \mathop{\lim}_{j\rightarrow \infty} \int_{\Omega} (x,\nabla u_{\varepsilon,j})(|\nabla u_{\varepsilon,j}|^2+\delta)^{\frac{p-2}{2}}\psi d\gamma.
\]
On the other hand, since $D^2 u_{\varepsilon,j}\geq -2c I$ and $\nabla u_{\varepsilon,j}$ is locally bounded, we have
\[
\mathrm{div} \left((|\nabla u_{\varepsilon,j}|^2+\delta)^{\frac{p-2}{2}} \nabla u_{\varepsilon,j}\right)  \geq -c_2,
\]
in the support set of $\psi$, where $c_2>0$ is a constant depending only on $p,n,c$. Thus, by Fatou's Lemma, we have
\[
\mathop{\lim \inf}_{j\rightarrow \infty} \int_\Omega \mathrm{div} \left((|\nabla u_{\varepsilon,j}|^2+\delta)^{\frac{p-2}{2}} \nabla u_{\varepsilon,j}\right)  \psi d\gamma \geq  \int_\Omega \mathop{\lim \inf}_{j\rightarrow \infty}\left(\mathrm{div} ((|\nabla u_{\varepsilon,j}|^2+\delta)^{\frac{p-2}{2}} \nabla u_{\varepsilon,j}) \right) \psi d\gamma.
\]
Moreover, it is shown in \cite[p.242]{Evans1992}, that $D^2\varphi_j(x)\rightarrow D^2\varphi(x)$ for a.e. $x\in \Omega$, and then
\[
\mathop{\lim \inf }_{j\rightarrow \infty} \mathrm{div} ((|\nabla u_{\varepsilon,j}|^2+\delta)^{\frac{p-2}{2}} \nabla u_{\varepsilon,j})=\mathrm{div} \left((|\nabla u_\varepsilon|^2+\delta)^{\frac{p-2}{2}} \nabla u_\varepsilon\right) ,
\]
for a.e $x\in \Omega$. Finally, using again the integration by parts, convergence properties above and \eqref{4-9}, we have
\begin{align*}
&\int_\Omega (|\nabla u_\varepsilon|^2+\delta)^{\frac{p-2}{2}}( \nabla u_\varepsilon \cdot \nabla \psi -(x,\nabla u_{\varepsilon}) \psi )d\gamma \\
&\quad=\mathop{\lim }_{j\rightarrow \infty} \int_{\Omega}  (|\nabla u_{\varepsilon,j}|^2+\delta)^{\frac{p-2}{2}} (\nabla u_{\varepsilon,j} \cdot \nabla\psi  -(x,\nabla u_{\varepsilon,j}) \psi) d\gamma\\
&\quad =-\mathop{\lim }_{j\rightarrow \infty} \int_{\Omega} \left(\mathrm{div} ((|\nabla u_{\varepsilon,j}|^2+\delta)^{\frac{p-2}{2}} \nabla u_{\varepsilon,j}) \right) \psi d\gamma\\
&\quad\leq - \int_{\Omega}\mathop{\lim \inf}_{j\rightarrow \infty} \left(\mathrm{div} ((|\nabla u_{\varepsilon,j}|^2+\delta)^{\frac{p-2}{2}} \nabla u_{\varepsilon,j}) \right) \psi d\gamma\\
&\quad =-\int_\Omega \mathrm{div} \left((|\nabla u_\varepsilon|^2+\delta)^{\frac{p-2}{2}} \nabla u_\varepsilon\right) \psi d\gamma.
\end{align*}
Therefore, we conclude that \eqref{3-12} holds.

Next, we prove
\begin{equation}\label{3-13}
\mathrm{div} \left((|\nabla u_\varepsilon|^2+\delta)^{\frac{p-2}{2}} \nabla u_\varepsilon\right) \geq -c_3,
\end{equation}
a.e. in $\Omega_{r(\varepsilon)}$, where $c_3$ is a positive constant independent of $\delta$.
Indeed, consider a point $x\in \Omega_{r(\varepsilon)}$ where both $\nabla u_\varepsilon(x)$ and $D^2 u_\varepsilon(x)$ exist. By a direct calculation, we have
\begin{equation}\label{3-14}
\mathrm{div} \left((|\nabla u_\varepsilon|^2+\delta)^{\frac{p-2}{2}} \nabla u_\varepsilon\right)=(|\nabla u_\varepsilon|^2+\delta)^{\frac{p-2}{2}}\left( \Delta u_\varepsilon+\frac{p-2}{|\nabla u_\varepsilon|^2+\delta}D^2 u_\varepsilon \nabla u_\varepsilon \cdot \nabla u_\varepsilon \right).
\end{equation}
By Proposition \ref{inf-convo-prop}, we know that
\[
 D^2 u_\varepsilon \geq- \frac{q-1}{\varepsilon}|\nabla u_\varepsilon|^{\frac{q-2}{q-1}}I.
\]
Notice that, if $\nabla u_\varepsilon(x)=0$, we have $D^2 u_\varepsilon(x)\geq 0$, hence from  \eqref{3-14} we get
\begin{equation}\label{3-15}
\mathrm{div} \left((|\nabla u_\varepsilon|^2+\delta)^{\frac{p-2}{2}} \nabla u_\varepsilon\right)\geq 0.
\end{equation}
If $\nabla u_\varepsilon(x)\neq 0$, we have
\begin{align*}
 &(|\nabla u_\varepsilon|^2+\delta)^{\frac{p-2}{2}}\left( \Delta u_\varepsilon+\frac{p-2}{|\nabla u_\varepsilon|^2+\delta}D^2 u_\varepsilon \nabla u_\varepsilon \cdot \nabla u_\varepsilon \right)\\
 &\quad \geq-\frac{(n+p-2)(q-1)}{\varepsilon} |\nabla u_\varepsilon|^{p-2+\frac{q-2}{q-1}}\\
 &\quad \geq-c_3,
\end{align*}
where the last inequality follows from $q>\frac{p}{p-1}$ and the Lipschitz continuity of $u_\varepsilon$. Combining \eqref{3-14} and  \eqref{3-15} gives \eqref{3-13}.

Thus, we can use Fatou's Lemma to conclude that
\begin{align*}
&-\mathop {\lim \inf}_{\delta\rightarrow 0}\int_{\Omega}\mathrm{div} \left((|\nabla u_\varepsilon|^2+\delta)^{\frac{p-2}{2}} \nabla u_\varepsilon\right)  \psi d\gamma+\int_\Omega (x,\nabla u_\varepsilon)|\nabla u_\varepsilon|^{p-2}\psi d\gamma\\
&\quad \leq -\mathop {\lim \inf}_{\delta\rightarrow 0}\int_{\Omega \setminus \{ \nabla u_\varepsilon= 0\}}\mathrm{div} \left((|\nabla u_\varepsilon|^2+\delta)^{\frac{p-2}{2}} \nabla u_\varepsilon\right)  \psi d\gamma+\int_\Omega (x,\nabla u_\varepsilon)|\nabla u_\varepsilon|^{p-2}\psi d\gamma\\
&\quad \leq -\int_{\Omega \setminus \{\nabla u_\varepsilon=0 \}} \mathop {\lim \inf}_{\delta\rightarrow 0} \mathrm{div} \big((|\nabla u_\varepsilon|^2+\delta)^{\frac{p-2}{2}} \nabla u_\varepsilon\big)   \psi d\gamma+\int_\Omega (x,\nabla u_\varepsilon)|\nabla u_\varepsilon|^{p-2}\psi d\gamma\\
&\quad = \int_{\Omega \setminus \{\nabla u_\varepsilon=0 \}} (-\mathrm{div} \left(|\nabla u_\varepsilon|^{p-2} \nabla u_\varepsilon \right) +(x,\nabla u_\varepsilon)|\nabla u_\varepsilon|^{p-2} ) \psi d\gamma\\
&\quad =\int_{\Omega \setminus \{\nabla u_\varepsilon=0 \}} -\Delta_{p,\gamma}u_\varepsilon \cdot \psi d\gamma\\
&\quad \leq \int_{\Omega\setminus \{\nabla u_\varepsilon=0 \} }f_\varepsilon \psi d\gamma,
\end{align*}
where the last inequality follows from \eqref{3-11}. Letting $\delta\rightarrow 0$ in \eqref{3-12}, we have
\[
\int_\Omega |\nabla u_\varepsilon|^{p-2} \nabla u_\varepsilon \cdot \nabla \psi d\gamma \leq \int_{\Omega \setminus \{\nabla u_\varepsilon=0 \}} f_\varepsilon \psi d\gamma\leq \int_{\Omega} f_\varepsilon \psi d\gamma,
\]
where in the last equality we have used  that $f_\varepsilon$ and $\psi$ are non-negative. By Lemma \ref{weak-converges} and Lemma \ref{error-term}, letting $\varepsilon \rightarrow 0$, we have
\[
\int_\Omega |\nabla u|^{p-2} \nabla {u} \cdot \nabla \psi d\gamma - \lambda_{p,\gamma} \int_{\Omega} |u|^{p-2} u \psi d\gamma\leq 0,
\]
for any non-negative $\psi \in C^{\infty}_0(\Omega)$. This completes its proof.
\end{proof}

Similarly, by using inf-convolution in place of sup-convolution, we can deduce the following.
\begin{proposition}\label{remark1}
 Given $p>1$. Suppose that $u:\Omega \rightarrow \R$ is bounded, non-negative, lower semi-continuous. Then, if $u$ is a viscosity super-solution to \eqref{PDE1}, that it is a distributional super-solution of \eqref{PDE1} in Sobolev space $W_{\mathrm{loc}}^{1,p}(\Omega,\gamma)$.
\end{proposition}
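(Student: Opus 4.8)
The plan is to reproduce, \emph{mutatis mutandis}, the argument of Theorem~\ref{Viscosity-weak}, replacing the sup-convolution by the inf-convolution and reversing every inequality. For $q\geq\max\{2,p/(p-1)\}$ and $\varepsilon>0$ set
\[
u^{\varepsilon}(x)=\inf_{y\in\Omega}\Big(u(y)+\frac{1}{q\varepsilon^{q-1}}|x-y|^{q}\Big),\qquad x\in\Omega .
\]
The inf-convolution enjoys the mirror image of the properties in Proposition~\ref{inf-convo-prop}: the family $\{u^{\varepsilon}\}$ is increasing in $\varepsilon$, $u^{\varepsilon}\le u$, $u^{\varepsilon}\to u$ locally uniformly, $u^{\varepsilon}$ is locally Lipschitz and twice differentiable a.e., it is non-negative and bounded by $\|u\|_{L^{\infty}(\Omega)}$, and --- this is the structural point --- it is \emph{semi-concave} on $\Omega_{r(\varepsilon)}$, so that $x\mapsto c|x|^{2}-u^{\varepsilon}(x)$ is convex for a suitable $c>0$ (namely $c=\frac1{2\varepsilon}$ when $p\ge2$, $q=2$); equivalently, at points of twice differentiability one has the \emph{upper} Hessian bound $D^{2}u^{\varepsilon}(x)\le\frac{q-1}{\varepsilon}|\nabla u^{\varepsilon}(x)|^{(q-2)/(q-1)}I$, and in particular $D^{2}u^{\varepsilon}(x)\le 0$ at a.e.\ $x$ with $\nabla u^{\varepsilon}(x)=0$. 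All of this is classical and proved exactly as for the sup-convolution.

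The first step is the analogue of Lemma~\ref{Viscosity1}: if $u$ is a viscosity super-solution of the equation in \eqref{PDE1}, then $u^{\varepsilon}$ is a viscosity super-solution in $\Omega_{r(\varepsilon)}$ of
\[
-\Delta_{p}v+(x,\nabla v)|\nabla v|^{p-2}=g_{\varepsilon}(x,v,\nabla v),\qquad
g_{\varepsilon}(x,u^{\varepsilon},\nabla u^{\varepsilon}):=-\varepsilon|\nabla u^{\varepsilon}|^{p}+\lambda_{p,\gamma}\inf_{y\in B_{r(\varepsilon)}(x)}u^{\varepsilon}(y)^{p-1}.
\]
This is the mirror of the computation for Lemma~\ref{Viscosity1}: touch $u^{\varepsilon}$ from below by a $C^{2}$ function $\varphi$ at $x^{0}\in\Omega_{r(\varepsilon)}$ (with $\nabla\varphi(x^{0})\neq0$ if $p<2$), pick $x^{0}_{\varepsilon}\in B_{r(\varepsilon)}(x^{0})$ realizing the infimum, apply the Crandall--Ishii lemma \cite[Theorem~3.2]{Ishii-Lions1992} to $u$ and $\varphi$ (with all jet inequalities reversed) to produce $(\eta,-Y)\in\bar J^{2,-}u(x^{0}_{\varepsilon})$, and use the viscosity super-solution property of $u$; the error in moving $x^{0}_{\varepsilon}$ to $x^{0}$ in the drift term contributes $-\varepsilon|\nabla u^{\varepsilon}|^{p}$ (with a minus sign, since it lands on the right-hand side), while $u\ge u^{\varepsilon}$ gives $u(x^{0}_{\varepsilon})^{p-1}\ge\inf_{B_{r(\varepsilon)}(x^{0})}u^{\varepsilon}(\cdot)^{p-1}$ --- here the non-negativity of $u$ is used, since $t\mapsto t^{p-1}$ is increasing on $[0,\infty)$. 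Then the Caccioppoli estimate of Lemma~\ref{Caccioppoli} carries over, $\int_{\Omega}|\nabla u^{\varepsilon}|^{p}\psi^{p}\,d\gamma\le c(1+\|\nabla\psi\|_{L^{\infty}(\Omega)})$ for $\varepsilon$ small, tested this time with $\varphi=(\sup_{K}u^{\varepsilon}-u^{\varepsilon})\psi^{p}\ge0$: the term $-\varepsilon|\nabla u^{\varepsilon}|^{p}$ contributes $+\varepsilon\int|\nabla u^{\varepsilon}|^{p}\varphi\,d\gamma$ after the reversal and is absorbed for $\varepsilon$ small, while the non-negative term $\lambda_{p,\gamma}\inf(\cdot)^{p-1}$ drops out with the right sign. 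Consequently Lemmas~\ref{weak-converges} and \ref{error-term} have their exact analogues: $u\in W^{1,p}_{\mathrm{loc}}(\Omega,\gamma)$, $\nabla u^{\varepsilon}\rightharpoonup\nabla u$ in $L^{p}_{\mathrm{loc}}(\Omega,\gamma)$, and $\int_{\Omega}g_{\varepsilon}(x,u^{\varepsilon},\nabla u^{\varepsilon})\psi\,d\gamma\to\lambda_{p,\gamma}\int_{\Omega}u^{p-1}\psi\,d\gamma$ for $\psi\in C^{\infty}_{0}(\Omega)$, the term $-\varepsilon|\nabla u^{\varepsilon}|^{p}$ vanishing thanks to the uniform local $L^{p}$ bound.

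To conclude one argues as in Theorem~\ref{Viscosity-weak}, splitting according to whether $p\ge2$ (take $q=2$) or $1<p<2$ (take $q>p/(p-1)$). In both cases one mollifies the convex function $c|x|^{2}-u^{\varepsilon}$ to get smooth $u^{\varepsilon}_{j}\to u^{\varepsilon}$ with $D^{2}u^{\varepsilon}_{j}\le 2cI$, so that on $\mathrm{spt}\,\psi$ the quantity $\Delta_{p}u^{\varepsilon}_{j}$ (resp.\ $\mathrm{div}((|\nabla u^{\varepsilon}_{j}|^{2}+\delta)^{(p-2)/2}\nabla u^{\varepsilon}_{j})$ in the singular case) is bounded \emph{above} by a constant; Fatou's lemma applied to the non-negative sequence obtained by adding that constant to $-\Delta_{p}u^{\varepsilon}_{j}$ (resp.\ to minus the regularized operator), together with $D^{2}\varphi_{j}\to D^{2}\varphi$ a.e.\ \cite[p.\,242]{Evans1992}, the integration-by-parts identity, and the a.e.\ differential inequality of the analogue of Lemma~\ref{Viscosity1}, yields $\int_{\Omega}|\nabla u^{\varepsilon}|^{p-2}\nabla u^{\varepsilon}\cdot\nabla\psi\,d\gamma\ge\int_{\Omega}g_{\varepsilon}(x,u^{\varepsilon},\nabla u^{\varepsilon})\psi\,d\gamma$ for every non-negative $\psi\in C^{\infty}_{0}(\Omega_{r(\varepsilon)})$. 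In the singular range the one-sided estimate \eqref{3-13} is replaced by its reverse, $\mathrm{div}((|\nabla u^{\varepsilon}|^{2}+\delta)^{(p-2)/2}\nabla u^{\varepsilon})\le c_{3}$ a.e.\ in $\Omega_{r(\varepsilon)}$ with $c_{3}$ independent of $\delta$: this follows from the upper Hessian bound on $u^{\varepsilon}$ and $q>p/(p-1)$ exactly as \eqref{3-13} followed from the lower one --- equivalently, apply \eqref{3-13} to $v=-u^{\varepsilon}$, which does satisfy the lower Hessian bound, noting that $v\mapsto\mathrm{div}((|\nabla v|^{2}+\delta)^{(p-2)/2}\nabla v)$ is odd; the set $\{\nabla u^{\varepsilon}=0\}$ is discarded using that $D^{2}u^{\varepsilon}=0$ a.e.\ on it, so that the regularized divergence and the integrand $|\nabla u^{\varepsilon}|^{p-2}\nabla u^{\varepsilon}\cdot\nabla\psi$ vanish there, exactly as in Case~2 of the proof of Theorem~\ref{Viscosity-weak}. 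Letting $\delta\to0$ (Fatou once more) and then $\varepsilon\to0$, via the analogues of Lemmas~\ref{weak-converges} and \ref{error-term}, gives
\[
\int_{\Omega}|\nabla u|^{p-2}\nabla u\cdot\nabla\psi\,d\gamma-\lambda_{p,\gamma}\int_{\Omega}|u|^{p-2}u\,\psi\,d\gamma\ \ge\ 0
\]
for all non-negative $\psi\in C^{\infty}_{0}(\Omega)$, which, together with $u\in W^{1,p}_{\mathrm{loc}}(\Omega,\gamma)$, is exactly the assertion.

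The work is essentially sign bookkeeping: one must check that reversing the inequalities of Lemmas~\ref{Viscosity1}--\ref{error-term} sends the ``bad'' lower-order contribution $\pm\varepsilon|\nabla u^{\varepsilon}|^{p}$ and all the Fatou arguments in the right direction. The genuine structural point, and the place where care is needed, is that the one-sided second-order estimate justifying integration by parts against $u^{\varepsilon}$ must now be the \emph{upper} bound on the (regularized) $p$-Laplacian --- which is precisely what the semi-concavity of the inf-convolution supplies, just as semi-convexity of the sup-convolution supplied the lower bound in Theorem~\ref{Viscosity-weak} --- together with the companion handling of $\{\nabla u^{\varepsilon}=0\}$ when $1<p<2$; apart from this, no new difficulty arises with respect to the sup-convolution case.
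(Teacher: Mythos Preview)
Your proposal is correct and follows exactly the approach the paper indicates: the paper's own proof of Proposition~\ref{remark1} is a single sentence (``Similarly, by using inf-convolution in place of sup-convolution, we can deduce the following''), and you have carefully spelled out the mirror argument with the inf-convolution, the semi-concavity replacing semi-convexity, and the reversed one-sided Hessian/Fatou estimates. Aside from a harmless slip in the monotonicity direction of $\{u^{\varepsilon}\}$ in $\varepsilon$, your sign bookkeeping is right and nothing beyond what you have written is needed.
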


\begin{remark}
Combining above two facts, we conclude that if $u\in C(\overline{\Omega})$ is a non-negative viscosity solution of the equation in \eqref{PDE1}, with $p>1$, then it is a local weak solution of \eqref{PDE1}.
\end{remark}

\section{Proofs of the main theorems}\label{Se4}

\subsection{A Brunn-Minkowski inequality}

\begin{proof}[\bf Proof of Theorem \ref{T1}.]
Let $\Omega_0$ and $\Omega_1$ be open bounded Lipschitz domains and let $u_i$ be a nonegative solution of problem \eqref{PDE1}, for $i=0,1$.
For simplicity, set
\begin{equation}\label{4-2}
\lambda_i=\lambda_{p,\gamma}(\Omega_i)\ \ \mathrm{for}\ \ i=0,1,\ \ \lambda_t=(1-t)\lambda_0+t\lambda_1.
\end{equation}
For $x\in \Omega_t$, let
\begin{equation}\label{4-3}
u_t(x)=\sup \left\{ u_0(x_0)^{1-t}u_1(x_1)^t:\ x_i \in \overline{\Omega}_i\ \text{for}\ i=0,1, x=(1-t)x_0+tx_1\right\}.
\end{equation}
For every $\bar{x}\in \overline{\Omega}_t$, there exists $\bar{x}_0\in \overline{\Omega}_0$, $\overline{x}_1\in \overline{\Omega}_1$ where the maximum in the definition \eqref{4-3} is attained, that is
\begin{equation}\label{4-4}
\bar{x}=(1-t)\bar{x}_0+t\bar{x}_1, \ \ u_t(\bar{x})=u(\bar{x}_0)^{1-t}u_1(\bar{x}_1)^t.
\end{equation}
Notice that if $\bar{x}\in \partial \Omega_t$, then $\bar{x}_i\in \partial \Omega_i$ for $i=0,1$. While, if $\bar{x}\in \Omega_t$,  by the boundary condition, we know that  $\bar{x}_i \in \Omega_i$ for $i=0,1$. In the latter case, as $u_0$ and $u_1$ are differentiable at $x_0$ and $x_1$, respectively, a straightforward consequence of the Lagrange multipliers theorem is that
\begin{equation}\label{4-5}
\frac{\nabla u_0(\bar{x}_0)}{u_0(\bar{x}_0)}=\frac{\nabla u_1(\bar{x}_1)}{u_1(\bar{x}_1)}=\theta.
\end{equation}
Next, we prove that $u_t$ is a viscosity sub-solution of the problem
\begin{equation}\label{4-6}
\left\{
\begin{aligned}
& -\Delta_p u_t+(x,\nabla u_t)|\nabla u_t|^{p-2}=\lambda_t|\nabla u_t|^{p-1}u_t, \ \ \ \ \ \  \mathrm {in} \ \ \Omega_t  ,  \\
& \mathop {\lim}_{x\rightarrow \partial \Omega} u_t=0.
\end{aligned}
\right.
\end{equation}
Let us consider two cases: $\theta\neq 0$ and $\theta=0$.

Case 1: $\theta\neq 0$.  In a sufficiently small neighborhood $B$ of $\bar{x}$, we define
\[
\psi(x)=u_0(x-\bar{x}+\bar{x}_0)^{1-t}u_1(x-\bar{x}+\bar{x}_1)^t.
\]
It is obvious that $\psi$ touches $u_t$ from above.
Moreover, we have
\[
\nabla \psi(x)=\psi(x)\left[ (1-t)\frac{\nabla u_0(x-\bar{x}+x_0)}{ u_0(x-\bar{x}+x_0)}+t\frac{\nabla u_1(x-\bar{x}+x_1)}{ u_1(x-\bar{x}+x_1)}\right]
\]
and
\begin{align*}
D^2 \psi(x)&=-\psi(x)\left[ (1-t)\frac{\nabla u_0 \otimes \nabla u_0}{u^2_0}(x-\bar{x}+\bar{x}_0)+t \frac{\nabla u_0 \otimes \nabla u_0}{u^2_0}(x-\bar{x}+\bar{x}_0)\right]\\
&\quad +\frac{\nabla \psi \otimes \nabla \psi}{\psi}(x)+ \psi(x)\left[ (1-t)\frac{D^2 u_0(x-\bar{x}+x_0)}{ u_0(x-\bar{x}+x_0)}+t\frac{D^2 u_1(x-\bar{x}+x_1)}{u_1(x-\bar{x}+x_1)}\right].
\end{align*}
Therefore, we have
\begin{equation}\label{4-7}
D^2 \psi (\bar{x})=\psi(\bar{x}) \left[ (1-t)\frac{D^2 u_0}{u_0}(\bar{x}_0)+t\frac{D^2 u_1}{u_1}(\bar{x}_1)\right],
\end{equation}
and
\begin{equation}\label{4-8}
\nabla \psi(\bar{x})=\psi(\bar{x})\theta\neq 0.
\end{equation}
By \eqref{p-Lapla-form}, \eqref{4-5}, \eqref{4-7} and \eqref{4-8}, we have
\begin{align*}
\Delta_p \psi(\bar{x})&= \psi(\bar{x})^{p-1}\left[ (1-t)\frac{\Delta_p u_0(\bar{x}_0)}{u_0(\bar{x}_0)^{p-1}}+t\frac{\Delta_p u_1(\bar{x}_1)}{u_1(\bar{x}_1)^{p-1}}\right] \\
&= (1-t)\psi(\bar{x})^{p-1}\left[(x,\nabla u_0(\overline{x}_0))|\nabla u_0(\overline{x}_0)|^{p-2}u_0(\bar{x}_0)^{1-p}+\lambda_0 \right]\\
&\quad +t\psi(\bar{x})^{p-1}\left[(x,\nabla u_1(\overline{x}_1))|\nabla u_1(\overline{x}_1)|^{p-2}u_1(\bar{x}_1)^{1-p}+\lambda_1 \right]\\
& =\psi(\bar{x})^{p-1}\langle \theta, (1-t)\bar{x}_0+t\bar{x}_1 \rangle |\theta|^{p-2} +\lambda_t|\psi(\bar{x})|^{p-1}\\
&=\langle \bar{x},\nabla \psi(\bar{x})\rangle | \psi(\bar{x})|^{p-2}+\lambda_t| \psi(\bar{x})|^{p-1}.
\end{align*}
The above fact yields that every test function $\phi\in C^2(\Omega_t)$ touching $u_t$ from above at $\bar{x}$ must also touch $\psi$ from above at $\overline{x}$, which gives
\begin{equation}\label{4-9}
-\Delta_p \phi(\bar{x})+(\bar{x},\nabla \phi(\bar{x})|\nabla \phi(\bar{x})|^{p-2}\leq \lambda_t| \phi(\bar{x})|^{p-2}\phi(\bar{x}).
\end{equation}

Case 2: $\theta= 0$. We know that $\nabla \phi (\bar{x})=0 $, so, if $p>2$ we have
\begin{equation}\nonumber
\Delta_p \phi(\bar{x})=|\nabla\phi(\bar{x})|^{p-2} \Delta \phi(\bar{x})+(p-2)|\nabla \phi|^{p-4}(D^2 \phi(\bar{x}),\nabla \phi)\cdot \nabla \phi=0,
\end{equation}
hence \eqref{4-9} holds. When $1<p<2$, the definition of viscosity solutions avoids points where $\theta=0$.

Thus, we deduce that $u_t$ is a viscosity sub-solution of problem \eqref{4-6}, that is, we proved that $u_t$ satisfies the inequality
\begin{equation}\label{viscsubsol}
-\Delta_p u_t+(x,\nabla u_t)|\nabla u_t|^{p-2}\leq \lambda_t |u_t|^{p-2}u_t,\ \  \mathrm{ in}\ \Omega_t\,\text{ in the viscosity sense}.
\end{equation}

Since $u_t\in C(\overline{\Omega}_t)$ and $u_t$ vanishes on $\partial \Omega_t$, we extend $u_t$ to $\Omega^{*}$ as follows
$$
\tilde{u}_t(x)=\left\{\begin{array}{ll}
u_t(x)\quad&\text{if }x\in\overline\Omega_t\\
0\quad&\text{in } \Omega^*\setminus{\overline\Omega}_t\,,
\end{array}\right.
$$
where $\Omega_{*}\supseteq \overline\Omega_t$ is an open, bounded Lipschitz domain, and we notice that $\tilde{u}_t\in C(\Omega^{*})$.
We claim that $\tilde{u}_t$ satisfies \eqref{viscsubsol} in the viscosity sense in $\Omega^*$.
Indeed, it is clear that we have to take care only of the points on $\partial\Omega_t$. Let $x\in\partial\Omega_t$ and let $\phi$ be a $C^2$ function touching $\tilde{u}_t$ from above at $x$, then it must hold $\phi(x)=0$ and $\nabla\phi(x)=0$ (because $\tilde{u}_t$ is vanishing outside $\Omega_t$ and  $\Omega_t$ is a Lipschitz domain), so $x$ is not considered in the definition of viscosity subsolution if $p\in(1,2)$, while
$-\Delta_p \phi(x)+(x,\nabla \phi(x))|\nabla \phi(x)|^{p-2}+\lambda_t |\phi(x)|^{p-2}\phi(x)=0$ if $p>2$, and the claim is proved.

Now, by Proposition \ref{Viscosity-weak}, $\tilde{u}_t$ is a local sub-solution of \eqref{PDE1} in $\Omega^*$. Then $u_t$ (which coincides with $\tilde u_t$ in $\overline\Omega_t$ and belongs to $W_0^{1,p}(\Omega_t,\gamma)$, as it is continuous in $\overline\Omega_t$ and vanishes on $\partial\Omega_t$) is a weak sub-solution of problem \eqref{PDE1}. So, taking the same $u_t$ as test function in the very definition of weak sub-solution, an integration by parts gives
\[
\int_{\Omega_t} |\nabla {u}_t|^{p} d\gamma \leq \lambda_t \int_{\Omega_t} |{u}_t|^{p} d\gamma,
\]
whence
\begin{equation}\nonumber
\lambda_{p,\gamma}(\Omega_t)\leq \lambda_t.
\end{equation}
This concludes the proof.
\end{proof}

\begin{proof}[\bf Proof of Corollary \ref{cor-min}.]
{   First of all, let us consider the convex hull $\Omega^*$ of $\Omega$ and notice that $\lambda_{p,\gamma}(\Omega^*)\leq\lambda_{p,\gamma}(\Omega)$ by the monotonicity of the eigenvalue with respect to inclusion. Then, } the proof is exactly the same as  \cite[Corollary 1.5]{Andrea-Paolo2024}.
\end{proof}

\subsection{Log-concavity of the first eigenfunction}

\begin{proof}[\bf Proof of Theorem \ref{T2}.]
Let $u$ be a positive solution of \eqref{PDE1} in $\Omega$. For $x\in \Omega$ and $t\in[0,1]$, we define
\begin{equation}
u_t(x)=\sup \left\{ u(x_0)^{1-t}u(x_1)^t:\ x_i \in \overline{\Omega}\ \text{for}\ i=0,1, x=(1-t)x_0+tx_1\right\}.
\end{equation}
The convex envelope $u^{*}$ of $u$ can be obtained as
\[
u^{*}(x)=\sup \left\{ u_t(x):\ t\in [0,1] \right\}\,.
\]
By definition, it is clear that $u^{*}\geq u_t\geq u$. Moreover, $u$ is log-concave in $\Omega$ if and only if $u^{*}=u=u_t$ for every $t\in[0,1]$. Similarly to the proof of Theorem \ref{T1} (in fact, just taking $\Omega_0=\Omega_1=\Omega$ and $u_0=u_1=u$ therein), we can deduce that  $u_t$ satisfies the inequality
\begin{equation}
-\Delta_p u_t+(x,\nabla u_t)|\nabla u_t|^{p-2}\leq \lambda_{p,\gamma} |u_t|^{p-2}u_t\quad  \mathrm{ in}\ \Omega,
\end{equation}
in viscosity and in distributional sense. Furthermore, $u_t\in W^{1,p}_{0}(\Omega,\gamma)$. Thus, we multiply both terms of the last inequality by $-u_t$ and integrate by parts (with respect to $\gamma$) to get
\[
\int_{\Omega} |\nabla {u}_t|^{p} d\gamma \leq \lambda_{p,\gamma}(\Omega)\int_\Omega|u_t|^pd\gamma,
\]
which implies that $u_t$ is an eigenfunction, by the definition of $ \lambda_{p,\gamma}(\Omega)$. As observed in the introduction, we have
$u_t=\alpha u$ for some $\alpha>0$.
On the other hand, by the definition of $u^{*}$, we have
\[
\mathop{\sup}_{\overline{\Omega}} u^{*}=\mathop{\sup}_{\overline{\Omega}} u_t=\mathop{\sup}_{\overline{\Omega}}u.
\]
Thus, we have $\alpha=1$ and $u_t=u$ in $\Omega$,
which implies that $u$ is log-concave.
\end{proof}

\end{document}